\title{Topics in noncommutative geometry}
\date{October 2015}
\author[F.~D'Andrea]{Francesco D'Andrea}
\address{Dipartimento di Matematica e Applicazioni, Universit\`a di Napoli ``Federico II''
         and I.N.F.N. Sezione di Napoli, Complesso MSA, Via Cintia, 80126 Napoli, Italy}
\email{francesco.dandrea@unina.it}
\keywords{Noncommutative geometry, Hopf algebras, compact quantum groups, quantization.}
\subjclass[2010]{Primary: 46L87; Secondary: 20G42, 17B37, 53D55.}
\numberwithin{equation}{section}
\newtheorem{thm}{Theorem}[section]
\newtheorem{lemma}[thm]{Lemma}
\newtheorem{prop}[thm]{Proposition}
\newtheorem{df}[thm]{Definition}
\newtheorem{ex}[thm]{Example}
\newtheorem{es}[thm]{Exercise}
\theoremstyle{definition}
\newtheorem{rem}[thm]{Remark}
\newcommand{\arxiv}[1]{arXiv:\htmladdnormallink{#1}{http://arxiv.org/abs/#1}}
\newcommand{\A}{\mathcal{A}}
\newcommand{\B}{\mathcal{B}}
\newcommand{\U}{\mathcal{U}}
\newcommand{\Q}{\mathbb{Q}}
\newcommand{\R}{\mathbb{R}}
\newcommand{\C}{\mathbb{C}}
\newcommand{\N}{\mathbb{N}}
\newcommand{\Z}{\mathbb{Z}}
\newcommand{\T}{\mathbb{T}}
\newcommand{\WP}{\mathbb{P}}
\newcommand{\id}{\mathsf{id}}
\newcommand{\mc}{\mathcal}
\newcommand{\mf}{\mathfrak}
\newcommand{\mr}{\mathrm}
\newcommand{\mv}{\underline}
\newcommand{\botimes}{\;\bar\otimes\;}
\newcommand{\inner}[1]{\left<\smash[t]{#1}\right>}
\newcommand{\de}{\mathrm{d}}
\newcommand{\quadratini}{{\tiny\raisebox{1pt}{$\blacksquare$}}\,}
\newcommand{\az}{\triangleright}
\newcommand{\lbrak}{[\hspace{-0.5pt}[}
\newcommand{\rbrak}{]\hspace{-0.5pt}]}
\begin{document}

\begin{abstract}
Lecture notes for the autumn school ``From Poisson Geometry to Quantum Fields on Noncommutative Spaces'', University of W{\"u}rzburg, 5-10 October 2015.
\end{abstract}

\maketitle

\makeatletter
\def\@tocline#1#2#3#4#5#6#7{\relax
  \ifnum #1>\c@tocdepth 
  \else
    \par \addpenalty\@secpenalty\addvspace{#2}%
    \begingroup \hyphenpenalty\@M
    \@ifempty{#4}{%
      \@tempdima\csname r@tocindent\number#1\endcsname\relax
    }{%
      \@tempdima#4\relax
    }%
    \parindent\z@ \leftskip#3\relax \advance\leftskip\@tempdima\relax
    \rightskip\@pnumwidth plus4em \parfillskip-\@pnumwidth
    #5\leavevmode\hskip-\@tempdima
      \ifcase #1
       \or\or \hskip 1em \or \hskip 2em \else \hskip 3em \fi%
      #6 \hskip 0.5em \nobreak\relax
    \dotfill\hbox to\@pnumwidth{\@tocpagenum{#7}}\par
    \nobreak
    \endgroup
  \fi}
\makeatother

\setcounter{tocdepth}{2}
\begin{center}
\begin{minipage}{0.8\textwidth}
{\small\tableofcontents}
\end{minipage}
\end{center}

\vspace{5mm}

\thispagestyle{empty}


\section*{Introduction}
The leitmotiv of these lectures is noncommutative principal $U(1)$-bundles and associated line bundles. The only prerequisite is some linear algebra and a basic knowledge of $C^*$-algebras.

In the first part of the minicourse I will give a brief introduction to Hopf-Galois theory and its applications, from field extensions to principal group actions. I will then recall Woronowicz' definition of compact quantum group and the notion of noncommutative principal bundle. When the structure group is $U(1)$, there is a construction due to Pimsner \cite{Pim97} (see also \cite{AKL14}) that allows to get the total space of a ``bundle'' (more precisely, a strongly graded $C^*$-algebra) from the base space and a noncommutative ``line bundle'' (a self-Morita equivalence bimodule). As an example of this construction, I will discuss the $U(1)$-principal bundles of quantum lens spaces over quantum weighted projective space, from \cite{DL14,ADL15}.

The second part is a peek into the realm of nonassociative geometry \cite{BM05,BM10,BSS15a,BSS15b}, which recently raised some interest due to possible applications to the Standard Model of elementary particles \cite{FB13,FB14a,FB14b} and strings \cite{BSS15a}. 
After a review of some properties of Hopf cochains and cocycles,
I~will discuss the theory of cochain quantization and its applications, from Albuquerque-Majid example of octonions \cite{AM99,AM98}, to ``line bundles'' on the noncommutative torus \cite{DF15}.


\section{Hopf-Galois extensions and Morita equivalence}\label{sec:1}

\subsection{Hopf algebras}
Let $\Bbbk$ be a field. By $\Bbbk$-\emph{algebra} I will always mean a unital associative algebra over $\Bbbk$ (unless stated otherwise). Let us recall the definition of $\Bbbk$-coalgebra, obtained from that of $\Bbbk$-algebra by ``reversing the arrows'': this is a $\Bbbk$-vector space $\mc{C}$ with two linear maps $\Delta:\mc{C}\to\mc{C}\otimes\mc{C}$ (coproduct) and $\epsilon:\mc{C}\to\Bbbk$ (counit) satisfying
\begin{align*}
(\id_{\mc{C}}\otimes\Delta)\Delta &=
(\Delta\otimes\id_{\mc{C}})\Delta \;, \qquad\text{(coassociativity)} \\
(\id_{\mc{C}}\otimes\epsilon)\Delta &=
(\epsilon\otimes\id_{\mc{C}})\Delta=\id_{\mc{C}} \;,
\end{align*}
where $\otimes$ is the algebraic tensor product over $\Bbbk$, and we identify $\mc{C}\otimes\Bbbk$ and
$\Bbbk\otimes\mc{C}$ with $\mc{C}$.

Let $\mathsf{Hom}_{\Bbbk}(V,W)$ be the set of linear maps between two $\Bbbk$-vector spaces $V$ and $W$. If $\mc{C}$ is a coalgebra and $\mc{A}$ an algebra, with multiplication map $m$, then $\mathsf{Hom}_{\Bbbk}(\mc{C},\A)$ is a (unital associative) algebra w.r.t.~the convolution product
$$
f_1\star f_2:=m(f_1\otimes f_2)\Delta \;,\qquad
\forall\;f_1,f_2\in\mathsf{Hom}_{\Bbbk}(\mc{C},\A) \;.
$$
In particular the dual $\mc{C}'=\mathsf{Hom}_{\Bbbk}(\mc{C},\Bbbk)$ of a coalgebra is an algebra (the converse statement being not always true in the infinite dimensional case). A \emph{bialgebra} $(\mc{B},m,\Delta,\epsilon)$ is an algebra and a coalgebra s.t.~$\Delta,\epsilon$ are (unital) algebra-morphisms.

If $\mc{B}$ is a bialgebra, then $\mathsf{End}_{\Bbbk}(\mc{B})$ is an algebra with convolution product and with unit element given by the endomorphism $b\mapsto\epsilon(b)1$. We call $\mc{B}$ a \emph{Hopf algebra} if the identity endomorphism $\id_{\mc{B}}$ is invertible in $\mathsf{End}_{\Bbbk}(\mc{B})$ (for the convolution product); its inverse $S:\mc{B}\to\mc{B}$ will be called \emph{antipode} or \emph{coinverse}, and when it exists by construction is also unique.

\begin{es}
(i) Prove that $S$ is a unital and antimultiplicative map, i.e.~$S(1)=1$ and $S(ab)=S(b)S(a)\;\forall\;a,b$.
(ii) Prove that if $\mc{B}$ is a commutative or cocommutative Hopf algebra (i.e.~$\Delta$ is symmetric), then $S^2=\id_{\mc{B}}$.
\end{es}

\begin{ex}
Let $T(V)$ be the free algebra generated by a vector space $V$. This is a Hopf algebra with operations  $\Delta,\epsilon,S$ uniquely defined by
$$
\Delta(v)=v\otimes 1+1\otimes v \;,\qquad
\epsilon(v)=0 \;,\qquad
S(v)=-v \;,
$$
for all $v\in V$. If $V=\mf{g}$ is a Lie algebra and $\mc{U}(\mf{g}):=T(\mf{g})/\inner{xy-yx-[x,y]:x,y\in\mf{g}}$ the universal enveloping algebra, then
$\mc{U}(\mf{g})$ is a Hopf algebra with operations induced by $T(\mf{g})$.
\end{ex}

\begin{ex}
There is an alternative Hopf algebra structure on $T(V)$, given by the shuffle product and deconcatenation coproduct. Recall that a permutation $p\in S_n$ is a $(k,n-k)$ \emph{shuffle} if
$p(1)<p(2)<\ldots<p(k)$ and $p(k+1)<p(k+1)<\ldots<p(n)$. The shuffle product on $T(V)$ is defined on monomials by
$$
(v_1v_2\ldots v_k)\cdot
(v_{k+1}v_{k+2}\ldots v_n)=\sum_{\sigma^{-1}\in S_{k,n-k}}v_{\sigma(1)}v_{\sigma(2)}\ldots v_{\sigma(n)} \;,
$$
where $S_{k,n-k}$ is the set of $(k,n-k)$ shuffles.
The \emph{deconcatenation coproduct} is defined by
$$
\Delta(v_1v_2\ldots v_k)=
1\otimes (v_1v_2\ldots v_k)+\sum_{i=1}^{k-1}
(v_1\ldots v_i)\otimes (v_{i+1}\ldots v_k)+
(v_1v_2\ldots v_k)\otimes 1 \;. \vspace{-20pt}
$$
\end{ex}

\begin{ex}\label{ex:1.4}
Let $G$ be a group.
The \emph{group algebra} $\Bbbk G$ is a vector space with basis $\{g|g\in G\}$ and product extending bilinearly the one of $G$.
It is a Hopf algebra with (co)operations $\Delta,\epsilon,S$ uniquely defined by
$$
\Delta(g)=g\otimes g \;,\qquad
\epsilon(g)=1 \;,\qquad
S(g)=g^{-1} \;,
$$
for all $g\in G$.
\end{ex}

\begin{ex}
Let $G$ be a \emph{finite} group.
Since the space $\Bbbk^G$ of functions $G\to\Bbbk$ is finite dimensional, we can identify
$\Bbbk^{G\times G}$ with $\Bbbk^G\otimes\Bbbk^G$.
Then, $\Bbbk^G$ is a Hopf algebra with pointwise product and with $\Delta,\epsilon,S$ dual to the group operations of $G$:
$$
\Delta f(g,h)=f(gh) \;,\qquad
\epsilon(f)=f(e) \;,\qquad
S f(g)=f(g^{-1}) \;,
$$
for all $f\in\Bbbk^G$, $g,h\in G$ and with $e\in G$ the unit element.
\end{ex}

\medskip

The examples above are either commutative or cocommutative (i.e.~the coproduct maps into the set of symmetric tensors). A class of finite-dimensional Hopf algebras that are neither commutative nor cocommutative was introduced by Taft \cite{Taf71}. (For an infinite-dimensional example, see Example \ref{ex:SUqn}.)

\begin{ex}
Suppose $\Bbbk$ is a field with a primitive $p$-th root of unity
$\lambda$ for some integer $p\geq 2$.
The Taft algebra $T_p(\lambda)$ is the abstract algebra (of dimension $p^2$) generated by $g$ and $x$ with relations
$x^n=0$, $g^n=1$, $gx=\lambda xg$. It is a Hopf algebra with $\Delta,\epsilon,S$ defined by
$$
\Delta(g)=g\otimes g \;,\;\;
\Delta(x)=x\otimes g+1\otimes x \;,\;\;
\epsilon(g)=1 \;,\;\;
\epsilon(x)=0 \;,\;\;
S(g)=g^{-1} \;,\;\;
S(x)=-xg^{-1} \;.
$$
The $p=2$ example is due to Sweedler \cite{Swe69}.
\end{ex}

\medskip

The importance of last example is that (over an algebraically closed field of characteristic zero) every Hopf algebra of dimension $p^2$, with $p$ prime, is either isomorphic to a group algebra $\Bbbk G$ (with $G=\Z_{p^2}$ or $G=\Z_p\times\Z_p$) or to a Taft algebra.

The restricted quantum enveloping algebra $U_q^{\mathrm{res}}(\mf{sl}_2)$ (which I shall not discuss here), for $q^2$ a root of unity, has Borel subalgebras isomorphic to Taft algebras \cite{CP94,Mon09}.

\subsection{Hopf-Galois theory}
Let $H$ be a coalgebra. A right $H$-\emph{comodule} is
a vector space $A$ with a linear map $\delta:A\to A\otimes H$ (the \emph{coaction}) satisfying
$$
(\delta\otimes\id)\delta=(\id\otimes\Delta)\delta \;,\qquad
(\id\otimes\epsilon)\delta=\id \;.
$$
We denote by $A^{\mathrm{co}H}:=\big\{a\in A:\delta(a)=a\otimes 1\big\}$ the set of coinvariant elements.

If $H$ is a Hopf algebra, a right $H$-\emph{comodule algebra} is a right $H$-comodule $A$ which is also an algebra, and such that $\delta$ is a homomorphism of unital algebras; in this case, $A^{\mathrm{co}H}$ is a unital subalgebra of $A$. Left comodules are defined in a similar way.

\begin{ex}[Pareigis Hopf algebra \cite{Par81}]
Let $H$ be generated by $x,g,g^{-1}$ with relations $xg+gx=x^2=0$.
It is a Hopf algebra with co-operations:
$$
\Delta(g)=g\otimes g \;,\;\;
\Delta(x)=x\otimes g+1\otimes x \;,\;\;
\epsilon(g)=1 \;,\;\;
\epsilon(x)=0 \;,\;\;
S(g)=g^{-1} \;,\;\;
S(x)=-xg^{-1} \;.
$$
(Note that the Sweedler Hopf algebra is a quotient by the further relation $g^2=1$.) Comodules for this Hopf algebra are the same as chain complexes. Given indeed a chain complex (of $\Bbbk$-vector spaces):
$$
\ldots\stackrel{d}{\longleftarrow}A_{-2}\stackrel{d}{\longleftarrow}A_{-1}\stackrel{d}{\longleftarrow}A_0\stackrel{d}{\longleftarrow}A_1\stackrel{d}{\longleftarrow}A_2\stackrel{d}{\longleftarrow}\ldots
$$
on $A=\bigoplus_{n\in\Z}A_n$ there is a $H$-coaction, defined on homogeneous elements $a\in A_n$ by
\begin{equation}\label{eq:Pareigis}
\delta(a)=a\otimes g^n+da\otimes g^{n-1}x \;.
\end{equation}
This correspondence between $H$-comodules and $\Bbbk$-complexes is an equivalence of categories.
\end{ex}

\begin{es}
(i) Check that \eqref{eq:Pareigis} is indeed a coaction.
(ii) Given a comodule $B$ for the Pareigis Hopf algebra,
explain how to get a chain complex from $B$
(hint: $\{g^n,g^nx\}_{n\in\Z}$ is a basis of $H$, hence
$B\otimes H=\bigoplus_{n\in\Z}B\otimes\Bbbk[g^n]+B\otimes\Bbbk[g^nx]$; call $A_n$ the first summand\hspace{1pt}...).
\end{es}

\begin{df}
Let $A$ be a right $H$-comodule algebra and $B:=A^{\mathrm{co}H}$.
The extension $B\subset A$ is \emph{Hopf-Galois} if the canonical map
\begin{equation}\label{eq:can}
A\otimes_BA\to A\otimes H \;,\qquad
a\otimes_B b\mapsto (a\otimes 1)\delta(b) \;,\vspace{-5pt}
\end{equation}
is bijective \cite{Sch94}.
\end{df}

\noindent
Motivating examples for previous definition come from number theory and differential geometry.

\begin{ex}[Principal group action]\label{ex:principal}
Let $G$ be a finite group and $X$ a finite $G$-space. 
A coaction of $H:=\Bbbk^G$ on $A:=\Bbbk^X$ is given by
$\delta f(x,g):=f(xg)$
(where we identify $\Bbbk^{X\times G}$ with $\Bbbk^X\otimes\Bbbk^G$).
The action of $G$ on $X$ is free and transitive if{}f the map
$$
X\times G\to X\times_{X/G}X \;,\qquad
(x,g)\mapsto (x,xg) \;,
$$
is bijective,
which is equivalent (dual) to bijectivity of the canonical map \eqref{eq:can}.
\end{ex}

\medskip

For topological groups, the appropriate framework is that of $C^*$-algebras, which is recalled in the next section.

\begin{ex}[Galois extension]
Let $E\subset F$ be two fields, $G=\mathrm{Aut}_E(F)$ the group of automorphisms of $F$ fixing the elements in $E$.
Recall that $E\subset F$ is called a \emph{Galois extension} if 
$F$ is algebraic over $E$ and all $G$-invariant elements of $F$ belongs to $E$. 

Let $\Bbbk\subset F$ be a subfield, $G$ a finite group acting as $\Bbbk$-automorphisms of $F$, and $H:=\Bbbk^G$. A coaction of $H$ on $F$ is given by
$$
\delta:F\to F\otimes_{\Bbbk}H\simeq F^G
\;,\qquad
\delta f(g):=g(f) \;.
$$
The extension $F^{\hspace{1pt}\mathrm{co}H}\subset F$ is Galois if{}f it is Hopf-Galois \cite{Mon09}.
\end{ex}

\smallskip

There are extensions that are not Galois for any group $G$, but are Hopf-Galois for some finite-dimensional Hopf algebra $H$; moreover, while for a Galois extension $G$ is unique, the same extension can be Hopf-Galois with respect to different (non isomorphic) Hopf algebras. Some properties, like the existence of a \emph{normal basis}, can be generalized to the Hopf-Galois setting \cite{Mon09}.

\begin{rem}\label{rem:prin}
In the spirit of Example \ref{ex:principal}, one could think of a Hopf-Galois extensions as noncommutative generalization of a principal bundle. In fact, several alternative definitions have been proposed, that allow to reproduce known properties of principal bundles.

In \cite{BM92}, the authors give a definition of \emph{quantum principal bundle} which depends on the choice of a differential calculus, and includes Hopf-Galois extensions as a subclass of examples.
In \cite{Haj96}, it was shown that a Hopf-Galois extension is the same as a quantum principal bundle in the sense of \cite{BM92} with universal differential calculus.
(In \cite{BM92} it is assumed that the ground field has characteristic $\neq 2$, and in \cite{Haj96} that the characteristic is $0$.)

In \cite{Dur96,Dur97} there is an alternative definition of quantum principal bundle, very similar to the one in \cite{BM92} although not completely equivalent. It was shown in \cite{Dur97} that, for a compact matrix quantum group (a compact quantum group generated by the matrix entries of a finite-dimensional unitary corepresentation, cf.~\S\ref{sec:cqg}), this definition is equivalent to that of Hopf-Galois extension.

Other generalizations of the notion of principal bundle are possible. It is worth mentioning \emph{principal comodule algebras}, that are Hopf-Galois extensions satisfying an additional condition called ``equivariant projectivity'' \cite{HKMZ07},
and \emph{principal extensions}, that are principal comodule algebras satisfying two additional conditions \cite{BH04}. 
\end{rem}

\subsection{Compact quantum groups}\label{sec:cqg}
Originally introduced in \cite{Wor87,Wor95}. I will adopt the terminology of \cite{Wan98}. Here $\botimes$ is the the minimal tensor product of $C^*$-algebras.

\begin{df}\label{def:woro}
A \emph{Woronowicz $C^*$-algebra} is a pair $(Q,\Delta)$ given by a complex unital \mbox{$C^*$-algebra} $Q$ and a unital $C^*$-algebra morphism $\Delta:Q\to Q\botimes Q$
such that
\begin{itemize}\itemsep=2pt
\item[i)]
$\Delta$ is coassociative, i.e.
$$
(\Delta\otimes\id)\circ\Delta=(\id\otimes\Delta)\circ\Delta
$$
as equality of maps $Q\to Q\botimes Q\botimes Q$;
\item[ii)] the sets
$\mathrm{Span}\bigl\{(a\otimes 1)\Delta(b)\,\big|\,a,b\!\in\! Q\bigr\}$ and
$\,\mathrm{Span}\bigl\{(1\otimes a)\Delta(b)\,\big|\,a,b\!\in\! Q\bigr\}$
are norm-dense in $Q\botimes Q$.
\end{itemize}
\end{df}

\noindent
Any commutative Woronowicz $C^*$-algebra is of the form $Q=C(G)$, the algebra of continuous functions on a compact topological group $G$, with pointwise product, sup norm and with $\Delta(f)(x,y)=f(xy)$ the pullback of the group multiplication.
For $Q=C(G)$, conditions i) and ii) correspond to the associativity
and cancellation property of the product in $G$, respectively.

Woronowicz $C^*$-algebras form a category, with morphisms given by
$C^*$-homomorphisms intertwining the coproduct. Given a Woronowicz
$C^*$-algebra $(Q,\Delta)$, we think of it as describing a \emph{compact quantum group}, defined as the dual object in the opposite category.

\begin{df}
A $n$-dimensional corepresentation of $Q$
is a matrix $U=(u^i_j)\in M_n(Q)$ s.t.
$$
\Delta(u^i_j)=\sum\nolimits_ku^i_k\otimes u^k_j \;.
$$
The corepresentation is:
\emph{faithful} if the set $\{u^i_j\}$ generates $Q$;
\emph{unitary} if \ $UU^*=U^*U=1\,$.
\end{df}

Unitary representations can be infinite-dimensional as well, but for the sake of simplicity I omit the definition.

For any Woronowicz $C^*$-algebra $Q$, there always exists a dense $*$-subalgebra $Q_0$ which is a Hopf $*$-algebra with the same coproduct $\Delta$. This subalgebra is canonical, and spanned by the matrix coefficients of the finite dimensional unitary corepresentations of $Q$ \cite{Wor87,Wor95}.

\begin{df}\label{def:alpha}
A \emph{coaction} of a Woronowicz $C^*$-algebra on a unital $C^*$-algebra $A$ is a unital $C^*$-homomorphism $\alpha: A\to A\botimes Q$ such that:
\begin{itemize}\itemsep=3pt
\item[i)] $(\alpha \otimes\id) \alpha=(\id\otimes \Delta) \alpha$,
\item[ii)]
$\mathrm{Span}\bigl\{\alpha(b)(1_\B \otimes a)
\,\big|\,b\in A,\,a\in Q\bigr\}$ is norm-dense in $A\botimes Q$.
\end{itemize}
\end{df}

\noindent
Condition (ii) in Def.~\ref{def:alpha} is equivalent to the existence of a norm-dense $*$-subalgebra $A_0$ of $A$ such that $\alpha$ restricted to $A_0$ is a coaction of the Hopf $*$-algebra $Q_0$~\cite{Pod95,Wan98}.
We think of $A$ as the algebra of continuous functions on a virtual compact quantum homogeneous space where the compact quantum group acts.

\medskip

With this machinery, one can generalize several geometrical notions, like the one of principal bundle. 
One could give a definition analogous to the one of Hopf-Galois extension in the $C^*$-algebra setting, but it is easier to consider a coaction of $Q$ on $A$ ``principal'' when (in the notation above)
$A_0^{\mathrm{co}\hspace{1pt}Q_0}\subset A_0$ is Hopf-Galois (or it is principal according to one of the definitions in Rem.~\ref{rem:prin}).
One can notice that when discussing topological properties of extensions, such as the one of being \emph{piecewise trivial} \cite{HKMZ07}, both $(Q,A)$ and $(Q_0,A_0)$ play a role, and one cannot avoid using $C^*$-algebras (see e.g.~the discussion in the introduction of \cite{HKMZ07}).

\begin{ex}
Let $G$ be a discrete group. The reduced group $C^*$-algebra $C^*_r(G)$ is a Woronowicz $C^*$-algebra with operations analogous to that of Example \ref{ex:1.4}. Note that if $G$ is finite, then $C^*_r(G)=\C G$ is just the group algebra of Example \ref{ex:1.4}.
\end{ex}

\begin{ex}
The $C^*$-algebra $A_s(n)$ of the \emph{quantum permutation group} $S^+_n$ is defined as the universal Woronowicz $C^*$-algebra with a coaction on the set with $n$ elements \cite{Wan98}. Its abelianization $C(S_n)$ is the algebra of continuous functions on the ordinary group of permutation of $n$ elements, and in fact $S_n^+=S_n$ if $n\leq 3$. If $n>3$, $S_n\subset S_n^+$ is a proper quantum subgroup, being $A_s(n)$ non-commutative and infinite-dimensional.
\end{ex}

\begin{ex}\label{ex:SUqn}
Let $0<q\leq 1$.
The Hopf $*$-algebra
$\mc{O}(SU_q(n+1))$ of ``representative functions'' on the compact quantum group $SU_q(n+1)$ is defined as follows.
It is generated by the matrix elements of an $n+1$-dimensional corepresentation $U=(u^i_j)_{i,j=1,...,n+1}$, with commutation relations \cite[Sec.~9.2]{KS97}:
\begin{align*}
u^i_ku^j_k &=qu^j_ku^i_k &
u^k_iu^k_j &=qu^k_ju^k_i &&
\forall\;i<j\;, \\
[u^i_l,u^j_k]&=0 &
[u^i_k,u^j_l]&=(q-q^{-1})u^i_lu^j_k &&
\forall\;i<j,\;k<l\;,
\end{align*}
and with determinant relation
$$
\sum\nolimits_{\pi\in S_{n+1}}(-q)^{\|\pi\|}
u^1_{\pi(1)}u^2_{\pi(2)}\ldots u^{n+1}_{\pi(n+1)}=1 \;,
$$
where the sum is over all permutations $\pi$ of the set
$\{1,2,\ldots,n+1\}$ and $\|\pi\|$ is the number of inversions in $\pi$. The $*$-structure is given by
$$
(u^i_j)^*=(-q)^{j-i}\sum\nolimits_{\pi\in S_n}(-q)^{\|\pi\|}u^{k_1}_{\pi(l_1)}
u^{k_2}_{\pi(l_2)}\ldots u^{k_n}_{\pi(l_n)}
$$
with $\{k_1,\ldots,k_n\}=\{1,\ldots,n+1\}\smallsetminus\{i\}$,
$\{l_1,\ldots,l_n\}=\{1,\ldots,n+1\}\smallsetminus\{j\}$
(as ordered sets) and the sum is over all permutations $\pi$ of $n$ elements.
Coproduct, counit and antipode are of `matrix' type:
$$
\Delta(u^i_j)=\sum\nolimits_ku^i_k\otimes u^k_j\;,\qquad
\epsilon(u^i_j)=\delta^i_j\;,\qquad
S(u^i_j)=(u^j_i)^*\;.
$$
The associated universal $C^*$-algebra is a Woronowicz $C^*$-algebra in the sense of Def.~\ref{def:woro}, denoted $C(SU_q(n+1))$.
%
\end{ex}

\begin{ex}\label{ex:Sqn}
The algebra $\mc{O}(S^{2n+1}_q)$ of `functions' on the unitary quantum sphere $S^{2n+1}_q$ is the $*$-algebra generated by
$\{z_i,z_i^*\}_{i=0,\ldots,n}$ with relations \cite{VS91}:
\begin{align*}
z_iz_j &=q^{-1}z_jz_i \hspace{-5mm} &&\forall\;i<j \;,&&
[z_i^*,z_i] =(1-q^2)\sum\nolimits_{j=i+1}^n z_jz_j^* 
    \quad\forall\;i\neq n \;,\\[2pt]
z_i^*z_j &=qz_jz_i^*  \hspace{-5mm} &&\forall\;i\neq j \;, &&
[z_n^*,z_n] =0 \;,
\end{align*}
\begin{center}
$z_0z_0^*+z_1z_1^*+\ldots+z_nz_n^*=1 \;.$\vspace{5pt}
\end{center}
An injective $*$-algebra morphism
$\imath:\mc{O}(S^{2n+1}_q)\hookrightarrow\mc{O}(SU_q(n+1))$ 
is given by the map $z_i\mapsto u_{n+1-i}^{n+1}$.
We will identify $\mc{O}(S^{2n+1}_q)$ with the corresponding
subalgebra of $\mc{O}(SU_q(n+1))$, and define $C(S^{2n+1}_q)$ as its $C^*$-completion in $C(SU_q(n+1))$.

The restriction of the coproduct to $\mc{O}(S^{2n+1}_q)$ turns it into a right $\mc{O}(SU_q(n+1))$-comodule algebra.
Denoting by $\{\tilde u^i_j\}_{i,j=1}^n$ the generators of $\mc{O}(SU_q(n))$, a left coaction on $\mc{O}(SU_q(n+1))$ is given on generators by $\delta(u^i_j)=\sum_{k=1}^n\tilde u^i_k\otimes u^k_j\;\forall\;i\leq n$ and $\delta(u^{n+1}_j)=1\otimes u^{n+1}_j$. The coinvariant subalgebra is $\imath\,\mc{O}(S^{2n+1}_q)$, and we can think of $S^{2n+1}_q$ as a ``quotient space''
$SU_q(n+1)/SU_q(n)$. We can write symbolically:
$$
\textup{\large ``}\; 
SU_q(n)
\lhook\joinrel\longrightarrow
SU_q(n+1)
\relbar\joinrel\twoheadrightarrow
S^{2n+1}_q
\;\textup{\large ''} \;.
$$
It was proved in \cite{Mey95} that this is a quantum principal bundle in the sense of \cite{BM92}, and then a Hopf-Galois extension \cite[Prop.~1.6]{Haj96}.
\end{ex}

\subsection{Strongly graded algebras and Morita equivalence}
In this section, I will first recall some properties of principal bundles whose Hopf algebra is a group algebra $\Bbbk G$, and then focus on the group $G=\Z$, i.e.~on actions of the dual $\widehat G=U(1)$.

\begin{df}[\cite{NvO82}]
Let $G$ be a group.
A $\Bbbk$-algebra $A$ is called \emph{$G$-graded} if $A=\bigoplus_{g\in G}A_g$ decomposes as direct sum of vector subspaces labeled by elements of $G$ such that \mbox{$A_gA_h\subset A_{gh}$} $\forall\;g,h\in G$.
It is \emph{strongly} $G$-graded if last inclusion is an equality:
$A_gA_h=A_{gh}\;\forall\;g,h\in G$.
\end{df}

If $A$ is $G$-graded, $A_gA_{g^{-1}}\subset A_e$ is a two-sided ideal in $A_e$. (Exercise: prove this statement.) The condition of strong grading is equivalent to the request that these ideals are trivial.

\begin{lemma}[\cite{NvO82}]\label{lemma:AA}
$A$ is strongly $G$-graded $\iff$ $A_gA_{g^{-1}}=A_e\;\forall\;g\in G$.
\end{lemma}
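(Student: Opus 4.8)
The plan is to prove both implications directly. The forward direction ($\Rightarrow$) is immediate: if $A$ is strongly $G$-graded then $A_gA_h=A_{gh}$ for all $g,h$, and taking $h=g^{-1}$ gives $A_gA_{g^{-1}}=A_e$. So the content is all in the reverse direction, and I expect that to be the only step requiring any thought.

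For the reverse direction, assume $A_gA_{g^{-1}}=A_e$ for every $g\in G$, and fix $g,h\in G$. One inclusion, $A_gA_h\subseteq A_{gh}$, holds by the definition of $G$-grading, so it suffices to show $A_{gh}\subseteq A_gA_h$. The key trick is to insert a resolution of the identity-component: write
$$
A_{gh}=A_e A_{gh}=(A_gA_{g^{-1}})A_{gh}=A_g(A_{g^{-1}}A_{gh})\subseteq A_gA_h \;,
$$
where in the first equality I use that $1\in A_e$ (so $A_e A_{gh}\supseteq A_{gh}$, and the reverse inclusion $A_eA_{gh}\subseteq A_{eh}=A_{gh}$ is the grading condition, giving equality), in the second I substitute the hypothesis $A_e=A_gA_{g^{-1}}$, in the third I use associativity of the product in $A$ together with the fact that these are subspaces closed under the (bilinear) multiplication map so that the span-products reassociate, and in the last step the grading condition gives $A_{g^{-1}}A_{gh}\subseteq A_{g^{-1}gh}=A_h$. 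This shows $A_{gh}\subseteq A_gA_h$, hence equality, and since $g,h$ were arbitrary $A$ is strongly graded.

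The one point deserving care — and the closest thing to an obstacle — is the reassociation $(A_gA_{g^{-1}})A_{gh}=A_g(A_{g^{-1}}A_{gh})$. Here $A_gA_{g^{-1}}$ denotes the linear span of products $xy$ with $x\in A_g$, $y\in A_{g^{-1}}$, not the set of such products, so one should note that the span of $(A_gA_{g^{-1}})A_{gh}$ equals the span of all triple products $x y z$ with $x\in A_g$, $y\in A_{g^{-1}}$, $z\in A_{gh}$, which by associativity in $A$ equals the span of $x(yz)$, i.e.\ $A_g(A_{g^{-1}}A_{gh})$. This is routine but is genuinely where the argument lives, so I would spell it out. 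Everything else is bookkeeping with the grading axiom $A_gA_h\subseteq A_{gh}$ and the presence of $1$ in $A_e$.
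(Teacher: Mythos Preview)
Your proof is correct and follows essentially the same approach as the paper. The only cosmetic difference is that you insert $A_e=A_gA_{g^{-1}}$ on the left, writing $A_{gh}=A_eA_{gh}=(A_gA_{g^{-1}})A_{gh}\subseteq A_gA_h$, whereas the paper inserts $A_e=A_{h^{-1}}A_h$ on the right, writing $A_{gh}=A_{gh}A_e=(A_{gh}A_{h^{-1}})A_h\subseteq A_gA_h$; these are mirror images of the same trick.
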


\begin{proof}
We prove ``$\Leftarrow$'', the other implication being trivial. For all $g,h\in G$:
$$
A_{gh}=A_{gh}A_e=(A_{gh}A_{h^{-1}})A_h\subset A_gA_h \subset A_{gh} \;.
$$
Here the first equality holds because $A_e$ is unital,
in the second equality I used the hypothesis $A_e=A_{h^{-1}}A_h$, and last two inclusions hold by the definition of $G$-grading. The above chain of inclusions clearly imply $A_gA_h=A_{gh}\;\forall\;g,h$, hence the thesis.
\end{proof}

Any $G$-graded algebra is canonically a $\Bbbk G$-comodule algebra with respect to the action defined on homogeneous elements by:
$$
\delta(a)=a\otimes g\;,\qquad\forall\;a\in A_g.
$$
Note that the subalgebra of coinvariants is $A^{\Bbbk G}=A_e$, where $e\in G$ is the unit element. There is a simple characterization of Hopf-Galois extension in this case:

\begin{thm}[\cite{Ulb82,Mon93}]\label{thm:1.22}
$A^{\Bbbk G}\hookrightarrow A$ is Hopf-Galois $\iff$ $A$ is strongly $G$-graded.
\end{thm}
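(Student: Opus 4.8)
The plan is to unwind the definition of the canonical map for the $\Bbbk G$-comodule algebra structure $\delta(a)=a\otimes g$ on homogeneous $a\in A_g$, and match its bijectivity with the strong grading condition, which by Lemma \ref{lemma:AA} we may take in the form $A_gA_{g^{-1}}=A_e$ for all $g$. Since $\Bbbk G=\bigoplus_{g\in G}\Bbbk g$ is a direct sum, the target $A\otimes\Bbbk G$ is the direct sum $\bigoplus_{g\in G}A\otimes g$, and the canonical map $\mathrm{can}\colon A\otimes_{A_e}A\to A\otimes\Bbbk G$ sends $a\otimes_{A_e}b$ (with $b\in A_h$) to $ab\otimes h$. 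So the $g$-component of the image of $a\otimes_{A_e}b$ is $ab\otimes g$ if $b\in A_g$ and $0$ otherwise; extending linearly, the $g$-component of $\mathrm{can}$ has image exactly $A\cdot A_g=\bigoplus_{h\in G}A_hA_g=\bigoplus_{h}A_{hg}=A$ when $A$ is strongly graded, and more precisely one should track components carefully. This suggests the cleanest route is to decompose $A\otimes_{A_e}A$ itself by $G$-degree.

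Concretely, first I would observe that $A\otimes_{A_e}A=\bigoplus_{h\in G}\big(\bigoplus_{k\in G}A_k\otimes_{A_e}A_h\big)$, and that $\mathrm{can}$ respects the grading if on the source we assign to $A_k\otimes_{A_e}A_h$ the degree $h$ (coming from the $\Bbbk G$ factor), since $\mathrm{can}(A_k\otimes_{A_e}A_h)\subset A_{kh}\otimes h\subset A\otimes h$. Thus $\mathrm{can}$ is bijective iff for each fixed $h\in G$ the induced map $A\otimes_{A_e}A_h\to A\otimes h\cong A$, $a\otimes_{A_e}b\mapsto ab$, is bijective. Right-multiplication by elements of $A_h$ gives an isomorphism $A_e\to A_eA_h$ of left $A_e$-modules only under strong grading, and more usefully: the map $A\otimes_{A_e}A_h\to A$ has image $A\cdot A_h=\bigoplus_k A_kA_h$, which equals $A=\bigoplus_k A_{kh}$ precisely when $A_kA_h=A_{kh}$ for all $k$ — i.e. the grading is strong. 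So surjectivity of $\mathrm{can}$ is equivalent to strong grading; this handles ``$\Leftarrow$'' for surjectivity and ``$\Rightarrow$'' entirely.

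For the remaining implication, assuming $A$ strongly graded I must show $\mathrm{can}$ is also injective. The standard device is to exhibit an explicit inverse. Strong grading gives, for each $g\in G$, finitely many elements $x_i\in A_{g^{-1}}$, $y_i\in A_g$ with $\sum_i x_iy_i=1$ (writing $1\in A_e=A_{g^{-1}}A_g$). Define, on the degree-$g$ piece $A\otimes g$ of the target, the map $A\to A\otimes_{A_e}A_g$, $a\mapsto\sum_i ax_i\otimes_{A_e}y_i$, and check it is a two-sided inverse to $\mathrm{can}$ restricted to $A\otimes_{A_e}A_g\to A\otimes g$: composing one way, $a\otimes_{A_e}b\mapsto ab\mapsto\sum_i abx_i\otimes_{A_e}y_i=\sum_i a\otimes_{A_e}bx_iy_i=a\otimes_{A_e}b$ using $bx_i\in A_e$ to move it across $\otimes_{A_e}$ and $\sum bx_iy_i=b$; the other way is immediate. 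Assembling over all $g$ gives the global inverse.

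The main obstacle — really the only subtle point — is the bookkeeping of the balanced tensor product over $A_e$: one must be careful that elements like $bx_i$ or $x_iy_i$ indeed lie in $A_e$ (which forces the degree constraints $b\in A_g$, $x_i\in A_{g^{-1}}$) so that they can legitimately be moved across $\otimes_{A_e}$, and that the partition-of-unity elements exist with the correct degrees — this is exactly where Lemma \ref{lemma:AA} (strong grading $\Leftrightarrow$ $A_gA_{g^{-1}}=A_e$) is used. Everything else is a direct-sum decomposition argument. I would present it by first reducing to the graded pieces, then doing the surjectivity $\Leftrightarrow$ strong-grading equivalence, and finally the explicit inverse for injectivity.
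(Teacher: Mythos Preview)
Your argument is correct, and the ``$\Leftarrow$'' direction is essentially the paper's proof: the paper writes the resolution of unity $1=\sum_i\xi_g^i\eta_g^i$ with $\xi_g^i\in A_{g^{-1}}$, $\eta_g^i\in A_g$, defines the candidate inverse $\varphi(a\otimes g)=\sum_i a\xi_g^i\otimes_B\eta_g^i$, and checks both compositions exactly as you do, using that $b_g\xi_g^i\in A_e$ can be moved across $\otimes_{A_e}$.

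Your ``$\Rightarrow$'' direction, however, is organized differently. You first decompose the canonical map as $\bigoplus_h\mathrm{can}_h$ with $\mathrm{can}_h\colon A\otimes_{A_e}A_h\to A$, and observe that \emph{surjectivity alone} forces $A_kA_h=A_{kh}$ for all $k,h$, hence strong grading. The paper does not isolate this surjectivity characterization; instead it uses bijectivity directly, writing $\psi^{-1}(1\otimes g)=\sum_i\xi_g^i\otimes_B\eta_g^i$, then applying $\psi$ and projecting onto graded components to see that (after taking homogeneous parts) one may assume $\xi_g^i\in A_{g^{-1}}$, $\eta_g^i\in A_g$ with $\sum_i\xi_g^i\eta_g^i=1$, and concludes via Lemma~\ref{lemma:AA}. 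Your route is a bit slicker and shows the stronger fact that surjectivity of the canonical map already characterizes strong grading; the paper's route is more constructive, since it explicitly produces the frame elements $\{\xi_g^i,\eta_g^i\}$ from $\psi^{-1}$, which is what is needed for the subsequent discussion of generating families for $A_g$.
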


\begin{proof}
The proof is simple and instructive. Let $H=\Bbbk G$ and $B:=A^H=A_e$.
The canonical map \eqref{eq:can}, let us denote it by $\psi$, is given by $\psi(a\otimes_Bb)=\sum_{g\in G}ab_g\otimes g$ for all $a,b\in A$ and with $b_g$ the component of $b$ in $A_g$.

\medskip

\noindent
\textit{Proof of} ``$\Leftarrow$''. If $A$ is strongly $G$-graded, from $A_{g^{-1}}A_g=A_e$ we deduce that $1\in A_e$ can be written as
\begin{equation}\label{eq:frame}
1=\sum\nolimits_{i=1}^{N_g}\xi_g^i\eta_g^i
\end{equation}
for some $\xi^i_g\in A_{g^{-1}}$ and $\eta^i_g\in A_g$, and for all $g\in G$. Let $\varphi:A\otimes H\to A\otimes_BA$ be the map defined by:
$$
\varphi(a\otimes g)=
\sum\nolimits_{i=1}^{N_g}a\xi_g^i\otimes_B\eta_g^i
\;,\qquad\forall\;a\in A,\;g\in G.
$$
From \eqref{eq:frame} it easily follows that $\psi\circ\varphi$ is the identity on $A\otimes H$ and, for all $a,b\in A$:
$$
\varphi\circ\psi(a\otimes_Bb)=\sum_{g\in G}\sum_{i=1}^{N_g}ab_g\xi_g^i\otimes_B\eta_g^i \;.
$$
Since $b_g\xi_g^i\in B$ and the tensor product is over $B$, we get
$$
\varphi\circ\psi(a\otimes_Bb)=\sum_{g\in G}\sum_{i=1}^{N_g}a
\otimes_Bb_g\xi_g^i\eta_g^i=a\otimes_Bb \;.
$$
Thus, $\varphi$ is the inverse of $\psi$, and the canonical map is bijective.

\medskip

\noindent
\textit{Proof of} ``$\Rightarrow$''. Suppose $\psi$ is invertible.
Then $\psi^{-1}(1\otimes g)=\sum\nolimits_{i=1}^{N_g}\xi_g^i\otimes_B\eta_g^i$ for some $\xi_g^i,\eta_g^i\in A$. Applying $\psi$ both sides one proves that $\xi^i_g\in A_{g^{-1}}$ and $\eta^i_g\in A_g$, and that \eqref{eq:frame} is satisfied. This implies $A_{g^{-1}}A_g=A_e\;\forall\;g$, which using Lemma \ref{lemma:AA} concludes the proof.
\end{proof}

Note that previous theorem is proved by constructing a generating family for $A_g$. For all $a\in A_g$, from \eqref{eq:frame}:
$$
a=\sum (a\xi^i_g)\eta^i_g=\sum \xi^i_{g^{-1}}(\eta^i_{g^{-1}}a) \;.
$$
Since $a\xi^i_g\in A_e$ and $\eta^i_{g^{-1}}a\in A_e$, the above identities tells us that $\{\eta^i_g\}$ is a generating family for $A_g$ as a left $A_e$-module, and $\{\xi^i_{g^{-1}}\}$ is a generating family for $A_g$ as a right $A_e$-module.
In particular, $A_g$ is finitely generated as a left/right module  as soon as $A$ is strongly graded.

The existence of a resolution of the unit as in \eqref{eq:frame} is in fact equivalent to the condition $A_{g^{-1}}A_g=A_e$ of Lemma \ref{lemma:AA}.

\medskip

Crossed products provide a family of examples of strongly graded algebras.

\begin{ex}\label{ex:crossed}
Let $\alpha:G\to\mathrm{Aut}(B)$ be an action of a group $G$ on a $\Bbbk$-algebra $B$. The \emph{crossed product} $A:=B\rtimes G$ has underlying vector space $B\otimes G$ and multiplication
$$
(a\otimes g)(b\otimes h)=a\,\alpha_g(b)\otimes gh \;,\qquad\forall\;a,b\in B,\;g,h\in G.
$$
It is strongly $G$-graded, with natural grading given by $A_g:=\mathrm{Span}\{a\otimes g:a\in B\}$.
\end{ex}

\medskip

Not all strongly graded algebras are crossed product (cf.~Example 2.9 of \cite{Mon09}).

\begin{df}[Morita equivalence]
Let $A,B$ be two $\Bbbk$-algebras. An $A$-$B$-bimodule $M$ is called a \emph{Morita equivalence bimodule} if there exists a $B$-$A$-bimodule $N$ such that $M\otimes_BN\simeq A$ as an $A$-bimodule and $N\otimes_AM\simeq B$ as a $B$-bimodule. If a Morita equivalence bimodule exists, $A$ and $B$ are called \emph{Morita equivalent} (and one can verify that it is an equivalence relations). If $A=B$ we talk about \emph{self Morita equivalence bimodule} (or SMEB).
\end{df}

\begin{ex}
For any $n\geq 1$,
$\Bbbk^n$ is a Morita equivalence bimodule between $\Bbbk$ and the matrix algebra $M_n(\Bbbk)$. More generally, for any (unital $\Bbbk$-)algebra $A$, the free module
$A^n$ is a Morita equivalence $M_n(A)$-$A$-bimodule, and any finitely generated projective right $A$-module is a Morita equivalence $B$-$A$ equivalence bimodule with $B:=\mathrm{End}_A(M)$
(this is discussed, for example, in Appendix A.2 and A.3 of \cite{Lan02}).
\end{ex}

\begin{prop}
If $A=\bigoplus_{g\in G}A_g$ is strongly $G$-graded, then $A_g$ is a SMEB for all $g$.
\end{prop}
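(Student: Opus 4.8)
The plan is to exhibit the Morita partner of $A_g$ explicitly: it will be $A_{g^{-1}}$, and the two bimodule isomorphisms required in the definition of a (self-)Morita equivalence bimodule will both be induced by the multiplication of $A$.

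First I would record the relevant bimodule structures. Since $A$ is $G$-graded, $A_eA_g\subset A_g$ and $A_gA_e\subset A_g$, so $A_g$ is an $A_e$-bimodule, and likewise $A_{g^{-1}}$; write $B:=A_e$. The multiplication of $A$ is $B$-balanced, so it descends to well-defined morphisms of $B$-bimodules
$$
\mu\colon A_g\otimes_B A_{g^{-1}}\longrightarrow B\ ,\qquad
\mu'\colon A_{g^{-1}}\otimes_B A_g\longrightarrow B\ ,\qquad
a\otimes_B b\longmapsto ab\ .
$$
By Lemma \ref{lemma:AA}, strong grading gives $A_gA_{g^{-1}}=A_{g^{-1}}A_g=B$, so $\mu$ and $\mu'$ are both surjective; everything then comes down to producing inverses.

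For this I would reuse the ``resolution of the unit'' from the proof of Theorem \ref{thm:1.22}. From $A_gA_{g^{-1}}=B$ choose $\zeta_i\in A_g$, $\theta_i\in A_{g^{-1}}$ with $1=\sum_i\zeta_i\theta_i$, and from $A_{g^{-1}}A_g=B$ choose $\xi_j\in A_{g^{-1}}$, $\eta_j\in A_g$ with $1=\sum_j\xi_j\eta_j$; these are instances of \eqref{eq:frame}. Put
$$
\nu(b):=\sum\nolimits_i b\zeta_i\otimes_B\theta_i\ ,\qquad
\nu'(b):=\sum\nolimits_j b\xi_j\otimes_B\eta_j\ .
$$
Then $\mu\nu(b)=\sum_i b\zeta_i\theta_i=b$ and $\mu'\nu'(b)=\sum_j b\xi_j\eta_j=b$, so $\mu\nu=\id_B$ and $\mu'\nu'=\id_B$. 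For the opposite composites the mechanism is exactly the one in Theorem \ref{thm:1.22}: the grading forces the ``inner'' products into $B$, so they can be pushed through $\otimes_B$. Concretely, for $x\in A_g$, $y\in A_{g^{-1}}$ one has $y\zeta_i\in A_{g^{-1}}A_g=B$, hence
$$
\nu\mu(x\otimes_B y)=\sum\nolimits_i xy\zeta_i\otimes_B\theta_i
=\sum\nolimits_i x\otimes_B (y\zeta_i)\theta_i
=x\otimes_B y\left(\sum\nolimits_i\zeta_i\theta_i\right)=x\otimes_B y\ ,
$$
and symmetrically $\nu'\mu'=\id$, using $A_gA_{g^{-1}}=B$. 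Thus $\mu$ and $\mu'$ are bijective morphisms of $B$-bimodules, i.e.\ $B$-bimodule isomorphisms $A_g\otimes_B A_{g^{-1}}\simeq B\simeq A_{g^{-1}}\otimes_B A_g$, which is precisely the assertion that $A_g$ is a SMEB over $B=A_e$, with inverse bimodule $A_{g^{-1}}$.

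I do not expect a genuine obstacle here beyond careful bookkeeping: one must keep the two resolutions of the unit apart (one living in $A_g\otimes_B A_{g^{-1}}$, the other in $A_{g^{-1}}\otimes_B A_g$) and use the elementary inclusions $A_{g^{-1}}A_g\subset B$ and $A_gA_{g^{-1}}\subset B$ that let those products act $B$-linearly across the balanced tensor product. In effect the argument is the bimodule-level repackaging of the proof that the canonical map is bijective in Theorem \ref{thm:1.22}.
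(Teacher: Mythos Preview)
Your proposal is correct and follows essentially the same approach as the paper: both use the multiplication map $A_{g^{-1}}\otimes_{A_e}A_g\to A_e$ and invert it via a resolution of the unit \eqref{eq:frame}. You supply more detail (writing out both compositions and treating the two directions with separate resolutions), whereas the paper handles only $A_{g^{-1}}\otimes_{A_e}A_g\simeq A_e$ and gets the other isomorphism implicitly from the ``for all $g$'' quantifier; but the underlying argument is identical.
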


\begin{proof}
By Lemma \eqref{lemma:AA}, $A$ is strongly graded if{}f for all $g$ there exists a resolution of the unit as in \eqref{eq:frame}.
We have to show that \eqref{eq:frame} is equivalent to the property of the $A_e$-bimodules $A_g$ of being SMEBs, i.e.~that there are bimodule isomorphisms $A_{g^{-1}}\otimes_{A_e}A_g\simeq A_e$ (for all $g$).

Similarly to the proof of Theorem \ref{thm:1.22},
it follows from \eqref{eq:frame} that the $A_e$-bimodule map
$$
\varphi:A_{g^{-1}}\otimes_{A_e}A_g\to A_e\;,\qquad
a\otimes_{A_e} b\mapsto ab \;,
$$
has inverse
$$
\varphi^{-1}:A_e\to A_{g^{-1}}\otimes_{A_e}A_g \;,\qquad
a\mapsto \sum\nolimits_ia\xi^i_g\otimes_{A_e}\eta^i_g \,
$$
as one can easily check.
\end{proof}

One could also prove the converse: if $A_g$ is a SMEB, with isomorphism $A_{g^{-1}}\otimes_{A_e}A_g\to A_e$ given by the multiplication map, then $A$ is strongly graded.

\smallskip

The connection with Morita equivalence allows for a nice geometrical interpretation of noncommutative principal bundles, which I will discuss for $G=\Z$. In this case, one can improve Lemma \ref{lemma:AA} as follows.

\begin{lemma}\label{lemma:1.27}
$A=\bigoplus_{n\in\Z}A_n$ is strongly $\Z$-graded $\iff$ $A_1A_{-1}=A_{-1}A_1=A_0$.
\end{lemma}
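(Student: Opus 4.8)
The plan is to reduce everything to Lemma~\ref{lemma:AA}. By that result, $A=\bigoplus_{n\in\Z}A_n$ is strongly $\Z$-graded precisely when $A_nA_{-n}=A_0$ for \emph{every} $n\in\Z$, so the content of the present lemma is that, over $\Z$, it suffices to know this in degree $\pm1$. The ``$\Rightarrow$'' direction is then trivial, and for ``$\Leftarrow$'' I must show that the two hypotheses $A_1A_{-1}=A_0$ and $A_{-1}A_1=A_0$ propagate to all integers. The degree-zero case is immediate: recalling the standard fact that in any unital $G$-graded algebra the unit lies in the identity component (so $A_0$ is a unital subalgebra and $A_nA_0=A_n=A_0A_n$), we get $A_0A_0=A_0$. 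The negative degrees reduce to the positive ones: for $n=-m$ with $m\ge1$ one has $A_nA_{-n}=A_{-m}A_m$, which is handled by the ``mirror'' induction using $A_{-1}A_1=A_0$ in place of $A_1A_{-1}=A_0$. So it is enough to prove $A_nA_{-n}=A_0$ for all $n\ge1$ by induction, the base case $n=1$ being the hypothesis.

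The inductive step rests on the chain of inclusions
\[
A_{n+1}A_{-n-1}\ \supseteq\ (A_nA_1)(A_{-1}A_{-n})\ =\ A_n\,(A_1A_{-1})\,A_{-n}\ =\ A_n\,A_0\,A_{-n}\ \supseteq\ A_nA_{-n}\ =\ A_0,
\]
where the first inclusion uses only $A_nA_1\subseteq A_{n+1}$ and $A_{-1}A_{-n}\subseteq A_{-n-1}$ from the definition of a $\Z$-grading, the middle equality is the hypothesis $A_1A_{-1}=A_0$, the following inclusion uses $1\in A_0$, and the last equality is the inductive hypothesis. Since $A_{n+1}A_{-n-1}\subseteq A_{0}$ always holds, this forces $A_{n+1}A_{-n-1}=A_0$. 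Running the same argument with the roles of degrees $1$ and $-1$ interchanged, now invoking $A_{-1}A_1=A_0$, yields $A_{-n}A_nA=A_0$ for all $n\ge1$ and thus takes care of the negative degrees as well. Together with the degree-zero case this gives $A_nA_{-n}=A_0$ for every $n\in\Z$, and Lemma~\ref{lemma:AA} concludes that $A$ is strongly $\Z$-graded.

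I do not expect a genuine obstacle here: the proof is a short induction. The only things that require attention are (i) recognizing that reducing to Lemma~\ref{lemma:AA} is cleaner than trying to build a resolution of the unit as in \eqref{eq:frame} directly in every degree, and (ii) keeping the bookkeeping straight about which of the two hypotheses, $A_1A_{-1}=A_0$ or $A_{-1}A_1=A_0$, drives which of the two one-sided inductions — one climbs from degree $n$ to degree $n+1$ on each side using the corresponding degree-$\pm1$ relation, and both are needed.
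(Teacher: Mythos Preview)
Your argument is correct (modulo the typo ``$A_{-n}A_nA=A_0$'' for ``$A_{-n}A_n=A_0$''). It is, however, a genuinely different route from the paper's. You reduce to Lemma~\ref{lemma:AA} and then run a single induction on $n\ge 1$ to get $A_nA_{-n}=A_0$ (and the mirror $A_{-n}A_n=A_0$), using the sandwich $(A_nA_1)(A_{-1}A_{-n})=A_nA_0A_{-n}$. The paper instead proves the full strong-grading identity $A_hA_k=A_{h+k}$ directly: from $A_{-1}A_1=A_0$ it first deduces the four ``shift'' relations $A_{k-1}A_1=A_1A_{k-1}=A_{k+1}A_{-1}=A_{-1}A_{k+1}=A_k$ (via the chain $A_k=A_kA_0=A_kA_{-1}A_1\subset A_{k-1}A_1\subset A_k$ and its variants), and then uses these to run a case-by-case induction over $h$ and $k$. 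Your approach is cleaner in that it reuses Lemma~\ref{lemma:AA} and needs only a one-variable induction; the paper's approach has the minor advantage of exhibiting the shift identities $A_{k\pm 1}A_{\mp 1}=A_k$ explicitly, which are sometimes useful in their own right.
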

\begin{proof}
``$\Rightarrow$'' is trivial ($A_jA_k=A_{j+k}$ by definition).
Concerning ``$\Leftarrow$'', using $A_{-1}A_1=A_0$ and proceeding as in the proof of Lemma \ref{lemma:AA}, we deduce
$$
A_k=A_kA_0=A_kA_{-1}A_1\subset A_{k-1}A_1\subset A_k \;,
$$
that is $A_{k-1}A_1=A_k$. Similarly one proves that
$A_1A_{k-1}$, $A_{k+1}A_{-1}$ and $A_{-1}A_{k+1}$ are all equal to $A_k$.
With these four relations, by induction we prove $A_hA_k=A_{h+k}\;\forall\;h,k\in\Z$ (the details are left as an exercise; hint: one has to distinguish four cases, and in each one use one of the four relations mentioned above).
\end{proof}

On the other hand, it is possible to reconstruct a strongly $\Z$-graded algebra from an algebra and two SMEBs.
Given the data of an algebra $B_0$ and two $B_0$-bimodules, we set (for $n\geq 1$):
$$
B_n=\underbrace{B_1\otimes_{B_0}B_1\otimes_{B_0}\ldots\otimes_{B_0} B_1}_{n\text{ times}} \;,\qquad
B_{-n}=\underbrace{B_{-1}\otimes_{B_0}B_{-1}\otimes_{B_0}\ldots\otimes_{B_0} B_{-1}}_{n\text{ times}} \;.
$$
Then: both $B_+:=\bigoplus_{n>0}B_n$ and $B_-:=\bigoplus_{n<0}B_n$
are graded associative (non-unital) algebras w.r.t.~the tensor product;
$B_0\oplus B_+$ and $B_0\oplus B_-$ are graded associative unital algebras if the product by $B_0$ is given by the bimodule structure.
Since $B_+$ and $B_-$ are generated respectively by $B_1$ and $B_{-1}$, in order to extend the product to $B:=\bigoplus_{n\in\Z}B_n$ we need two bimodule maps
$$
\varphi:B_1\otimes_{B_0}B_{-1}\to B_0 \;,\qquad
\psi:   B_{-1}\otimes_{B_0}B_1\to B_0 \;,
$$
that are used to define $\xi\cdot\eta:=\varphi(\xi,\eta)$ and $\eta\cdot\xi:=\psi(\eta,\xi)$ for $\xi\in B_1$ and $\eta\in B_{-1}$.
Associativity requires
\begin{equation}\label{eq:phipsi}
\varphi(a,b)\hspace{1pt}c=a\hspace{1pt}\psi(b,c)
\qquad\text{and}\qquad
b\,\varphi(c,d)=\psi(b,c)d
\end{equation}
for all $a,c\in B_1$ and $b,d\in B_{-1}$.
In this way one gets a $\Z$-graded algebra $B$.
It is not difficult to prove that it is strongly graded if{}f $\varphi,\psi$ are bijective, that is if{}f $B_1,B_{-1}$ are SMEBs.

\begin{es}
Prove that the algebra $B$ above is strongly graded if and only if
$\varphi,\psi$ are bijective. Hint: the only non-trivial part is to prove that strongly graded implies injectivity, which can be done using \eqref{eq:phipsi}.
\end{es}

The construction above has a nice geometrical interpretation: a strongly $\Z$-graded algebra is the algebraic counterpart of a principal $U(1)$-bundle, and reconstructing it from two SMEBs is equivalent to reconstructing the principal bundle from a pair of dual line bundles. This is briefly explained in the next section.

\subsection{Pimsner's algebras and (noncommutative) principal \texorpdfstring{$U(1)$}{U(1)}-bundles}\label{sec:1.5}
The construction of previous section of a strongly $\Z$-graded algebra from self Morita equivalence bimodules can be better understood in the $C^*$-algebraic setting. 
Central in the construction is the notion of \emph{strong} Morita equivalence, introduced by M.A.~Rieffel \cite{Rie74}.

Suppose we have a $C^*$-algebra $A_0$ and a pair $(E,\phi)$ of a right Hilbert $A_0$-module and an injective $C^*$-homomorphism
$\phi$ from $A_0$ to the algebra of adjointable endomorphisms of $E$.
With this data one can construct a $C^*$-algebra $A$ containing $A_0$, called the Pimsner algebra of $E$ \cite{Pim97}. When $E$ is a SMEB, $A$ contains a dense strongly $\Z$-graded algebra, whose degree zero part is dense in $A_0$. In a purely algebraic setting, this is essentially the construction discussed at the end of previous section. In fact, such a construction generalize both crossed product by $\Z$, as well as other important classes of $C^*$-algebras, like Cunts-Krieger algebras.
I will not discuss the details here: the interested reader can see e.g.~\cite{Ari15,ABL14,AKL14,ADL15}.

A celebrated example of noncommutative space, that is described by a Pimsner algebra is the noncommutative torus (which is, in fact, a crossed product $C(S^1)\rtimes\Z$).
Quantum lens spaces (see next section) are examples of Pimsner algebras that are not crossed product.

\smallskip

The relation with principal $U(1)$-bundles is the following. Let $A=C(X)$ be the $C^*$-algebra of continuous function on a compact topological Hausdorff space (note that any commutative unital $C^*$-algebra has this form, by Gelfand-Naimark theorem).
The module $M$ of continuous sections of a Hermitian vector bundle $V\to X$ (with finite-dimensional fiber) is an example of strong Morita equivalence bimodule between $A$ and $B=\mathrm{End}_A(M)$ \cite[Ex.~2]{Rie82}, and by Serre-Swan theorem every finitely generated projective $A$-module is of this form. On the other hand, there are examples (not finitely generated) that do not come from vector bundles: for $\C=C(\{\mathrm{pt}\})$ the algebra of functions on one point, $\ell^2(\N)$ is a strong Morita equivalence bimodule between $\C$ and the algebra of compact operators $\mc{K}(\ell^2(\N))$.

On the other hand, in the unital case a SMEB is always finitely generated projective. Indeed, if $A$ and $B$ are both unital $C^*$-algebras (not necessarily commutative), every strong Morita equivalence $A$-$B$-bimodule is finitely generated and projective (see \cite[Ex.~4.20]{GVF01} or \cite[p.~291]{Rie82}). In particular, for $A=C(X)$ every SMEB $M$ is 
the module of sections of a Hermitian vector bundle $V\to X$, and since $\mathrm{End}_A(M)=A$ we deduce that $V$ is a line bundle.

\smallskip

Let now $P\to X$ be a principal $U(1)$-bundle. Then, there is a right coaction $\delta$ of the (commutative) Woronowicz $C^*$-algebra $Q:=C(U(1))$ on $A:=C(P)$ given by the pullback of the action of $U(1)$ on $P$, and the subalgebra of coinvariants can be identified with $A_0:=C(X)$. Let $u$ be the (unitary) generator of $Q$, and $A_n:=\{a\in A:\delta(a)=a\otimes u^n\}$ the corresponding weight spaces, $n\in\Z$. Then, $\bigoplus_{n\in\Z}A_n$ is dense in $A$; it is strongly $\Z$-graded, the action being principal, and each $A_n$ is a SMEB for $A_0$, hence the module of sections of a Hermitian line bundle on $X$. From a geometric point of view, Pimsner's construction allows to reconstruct the total space $P$ of a principal $U(1)$-bundle on $X$ from a line bundle on the base space.
Note that there is a isomorphism of vector spaces
$A_1\to A_{-1}$, $a\mapsto a^*$, and this is the reason why we need just one line bundle to reconstruct $A$.

\begin{rem}
Let $P\to X$ be a principal $U(1)$-bundle. Then $C(P)$ is an example of Pimsner algebra that is not a crossed product by $\Z$, unless $P\simeq X\times U(1)$ is a trivial bundle. Indeed, both $C(P)$ and $C(X)$ are commutative, so $C(P)=C(X)\rtimes\Z$ implies that the action of $\Z$ is trivial. Therefore $C(P)\simeq C(X)\botimes C_r^*(\Z)\simeq C(X)\botimes C(U(1))$, which implies $P\simeq X\times U(1)$.
\end{rem}

\subsection{Quantum weighted projective spaces}
This section is devoted to the example of quantum lens spaces $L_q^n(p;\mv{\ell})$ and quantum weighted projective spaces $\WP^n_q(\mv{\ell})$, from \cite{DL14}. The first two arrows of the following diagram have been already illustrated in Example \ref{ex:SUqn} and \ref{ex:Sqn}. I will now focus on the framed part of the diagram.

\smallskip

\begin{center}
\begin{tikzpicture}
\tikzset{
    iniet/.style={right hook->, shorten >=1pt, shorten <=2pt},
		suriet/.style={->, shorten >=1pt, shorten <=1pt},
}

\node (A) {$SU_q(n)$};
\node[right=of A] (B) {$SU_q(n+1)$};
\node[below=of B] (C) {$S^{2n+1}_q$};
\node[right=of C] (E) {$L_q^n(p;\mv{\ell})$};
\node[below=of E] (G) {$\WP^n_q(\mv{\ell})$};

\draw[iniet] (A) -- (B);
\draw[suriet] (B) -- (C);
\draw[suriet] (C) -- node[above]{\small $\Z_p$} (E);
\draw[suriet] (E) -- node[right]{\small $U(1)$} (G);
\draw[suriet] (C) -- node[below left]{\small $U(1)$} (G);

\node [above=0pt of E] (L) {\rule{50pt}{0pt}};
\node [below=2.3cm of L] (M) {\rule{50pt}{0pt}};
\node [left=1.9cm of L] (N) {};
\node [left=1.9cm of M] (O) {};

\draw[color=gray,thick,dashed] (O.south west) -- (N.north west) -- (L.north east) -- (M.south east) -- cycle;

\end{tikzpicture}
\end{center}

\subsubsection{``Classical'' weighted projective spaces}
Let us start with some standard material.

\begin{df}
A \emph{weight vector} $\mv{\ell}=(\ell_0,\ldots,\ell_n)$ is a finite sequence of positive integers, called \emph{weights}.
A weight vector is:
\begin{itemize}\itemsep=1pt
\item \emph{coprime} if $\gcd(\ell_0,\ldots,\ell_n)=1$;
\item \emph{pairwise coprime} if $\gcd(\ell_i,\ell_j)=1\;\forall\;i\neq j$.
\end{itemize}
\end{df}

\noindent
Given a weight vector $\mv{\ell}=(\ell_0,\ldots,\ell_n)$, an action $U(1)\curvearrowright S^{2n+1}$
is given by:
$$
z\mapsto \big(t^{\ell_0}z_0,\ldots,t^{\ell_n}z_n\big)
$$
for all $t\in U(1)$ and $z=(z_0,\ldots,z_n)\in S^{2n+1}$ (a unit vector in $\C^{n+1}$).
The quotient (w.r.t.~the action above):
$$
\WP^n(\mv{\ell})= S^{2n+1}/U(1)
$$
is a complex projective variety called \emph{weighted (complex) projective space}.
These spaces have been classified \cite{BFNR13}, and it is well known that two of them are isomorphic as projective varieties if{}f they are homeomorphic.
In particular, $$\WP^n(1,\ldots,1)\simeq\C\mathrm{P}^n\simeq SU(n+1)/U(n)$$ is an ordinary projective space, and
$\WP^1(\ell_0,\ell_1)\simeq\C\mathrm{P}^1\;\forall\;\ell_0,\ell_1$.

As quotient spaces they have a natural orbifold structure and include, in complex dimension 1, the orbifolds named \emph{teardrops} by Thurston in \cite{Thu80} (all homeomorphic to $S^2$).

A result in \cite{DL14}, which to the best of our knowledge was not known in the literature, is the next Prop.~\ref{prop:classical}.

\begin{df}[The sharp map]
Given a weight vector $\mv{\ell}=(\ell_0,\ldots,\ell_n)$,
we denote by $\mv{\ell}^\sharp$ the weight vector with $i$-th component equal to $\prod_{j\neq i}\ell_j$.
\end{df}

\begin{ex}
$(\ell_0,\ell_1)^\sharp=(\ell_1,\ell_0)$ and
$(\ell_0,\ell_1,\ell_2)^\sharp=(\ell_1\ell_2,\ell_0\ell_2,\ell_0\ell_1)$
for all $\ell_0,\ell_1,\ell_2$.
\end{ex}

\medskip

\noindent
The map $\sharp:\mv{\ell}\mapsto\mv{\ell}^\sharp$ is almost an involution. For all weight vectors:
$$
(\mv{\ell}^\sharp)^\sharp=k\mv{\ell}
$$
with $k=(\ell_0\ell_1\ldots\ell_n)^{n-1}$.
Since trivially $\WP^n(\mv{\ell})\simeq\WP^n(k\mv{\ell})$ for all $k\geq 1$,
we don't loose generality if we assume one of the following properties: \vspace{3pt}
\begin{center}
i) $\;\mv{\ell}$ is coprime,
\qquad\qquad or \qquad\qquad
ii) $\;\mv{\ell}\in\mathrm{Im}(\sharp)$. \vspace{3pt}
\end{center}
While we can always assume either (i) or (ii), in general we cannot assume that both are satisfied. A result of \cite{DL14} is, in fact, that (i) and (ii) are satisfied simultaneously if{}f $\WP^n(\mv{\ell})\simeq\C\mathrm{P}^n$.

\begin{prop}~\label{prop:classical}
\begin{list}{$\bullet$}{\leftmargin=1.5em \itemsep=3pt}

\item $\mv{\ell}^\sharp$ is coprime $\quad\iff\quad\mv{\ell}$ is pairwise coprime.

\item $\WP^n(\mv{\ell}^\sharp)\simeq\C\mathrm{P}^n\!\quad\iff\quad\mv{\ell}$ is
pairwise coprime.

\item Equivalently: let $\mv{\ell}$ be coprime; then:
$\quad
\WP^n(\mv{\ell})\simeq\C\mathrm{P}^n\quad\iff\quad\mv{\ell}\in\mathrm{Im}(\sharp) \;.
$
\end{list}
\end{prop}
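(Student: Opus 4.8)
\medskip

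The three bullets are clearly equivalent to each other once we know a little combinatorics of the sharp map, so the plan is to prove the first bullet directly and then deduce the other two, using the already-recorded facts that $(\mv{\ell}^\sharp)^\sharp = k\mv{\ell}$ with $k=(\ell_0\cdots\ell_n)^{n-1}$, that $\WP^n(k\mv{\ell})\simeq\WP^n(\mv{\ell})$, and that $\WP^n(1,\ldots,1)\simeq\C\mathrm{P}^n$. First I would attack the number-theoretic equivalence ``$\mv{\ell}^\sharp$ coprime $\iff\mv{\ell}$ pairwise coprime''. For the direction ``$\Leftarrow$'': suppose a prime $r$ divides every component $\prod_{j\neq i}\ell_j$ of $\mv{\ell}^\sharp$; then for each $i$ there is some $j\neq i$ with $r\mid\ell_j$, and with $n\geq 1$ this forces $r$ to divide at least two of the $\ell_k$'s (pick two distinct indices from the pigeonhole), contradicting pairwise coprimality. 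For ``$\Rightarrow$'' (contrapositive): if some prime $r$ divides both $\ell_a$ and $\ell_b$ with $a\neq b$, then for every index $i$ at least one of $a,b$ is different from $i$, so $r\mid\prod_{j\neq i}\ell_j$ for all $i$, i.e.\ $r\mid\gcd(\mv{\ell}^\sharp)$, so $\mv{\ell}^\sharp$ is not coprime.

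\medskip

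Next, the second bullet. The key extra input needed is: $\WP^n(\mv{m})\simeq\C\mathrm{P}^n$ if and only if $\mv{m}$ is, up to the overall rescaling $\mv{m}\mapsto k\mv{m}$, equal to $(1,\ldots,1)$ — equivalently, after dividing $\mv{m}$ by $\gcd$ to make it coprime, one gets $(1,\ldots,1)$. (This is exactly the content of the classification result of \cite{BFNR13} quoted just above the proposition; I would cite it as the one genuinely external fact.) Granting this, ``$\Leftarrow$'' of bullet two: if $\mv{\ell}$ is pairwise coprime then by bullet one $\mv{\ell}^\sharp$ is coprime, and one checks that a coprime weight vector lying in $\mathrm{Im}(\sharp)$ must be $(1,\ldots,1)$. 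Indeed if $\mv{m}=\mv{\ell}^\sharp$ is coprime, apply $\sharp$ again: $\mv{m}^\sharp=(\mv{\ell}^\sharp)^\sharp=k\mv{\ell}$; since $\mv{m}$ is coprime, by bullet one $\mv{m}$ is pairwise coprime, but the only pairwise coprime \emph{and} coprime vector whose components are all of the form $\prod_{j\neq i}(\text{stuff})$ and which is coprime is $(1,\ldots,1)$ — more directly, a coprime vector in $\mathrm{Im}(\sharp)$ satisfies both conditions (i) and (ii), and the result of \cite{DL14} recalled immediately before the proposition says precisely that (i)+(ii) holds iff $\WP^n\simeq\C\mathrm{P}^n$, giving the conclusion. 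For ``$\Rightarrow$'': if $\WP^n(\mv{\ell}^\sharp)\simeq\C\mathrm{P}^n$ then $\mv{\ell}^\sharp$ made coprime is $(1,\ldots,1)$; tracing back through $(\mv{\ell}^\sharp)^\sharp=k\mv{\ell}$ and the characterization of when a sharp-image is coprime, one extracts that $\mv{\ell}$ is pairwise coprime.

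\medskip

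Finally, the third bullet is just a reformulation: assume $\mv{\ell}$ coprime. If $\mv{\ell}\in\mathrm{Im}(\sharp)$, say $\mv{\ell}=\mv{m}^\sharp$, then $\mv{\ell}$ is simultaneously coprime and in the image of $\sharp$, i.e.\ conditions (i) and (ii) hold together, which by the cited result of \cite{DL14} is equivalent to $\WP^n(\mv{\ell})\simeq\C\mathrm{P}^n$. Conversely, $\WP^n(\mv{\ell})\simeq\C\mathrm{P}^n$ with $\mv{\ell}$ coprime forces $\mv{\ell}=(1,\ldots,1)=(1,\ldots,1)^\sharp\in\mathrm{Im}(\sharp)$. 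I expect the main obstacle to be pinning down cleanly the statement ``a \emph{coprime} weight vector in $\mathrm{Im}(\sharp)$ is forced to be $(1,\ldots,1)$'' and, more generally, bookkeeping the interaction between the $\gcd$-normalization and the non-involutivity $(\mv{\ell}^\sharp)^\sharp=k\mv{\ell}$ of $\sharp$; everything else is either the elementary prime-divisibility argument of the first paragraph or a direct appeal to the already-quoted classification.
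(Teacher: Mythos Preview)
The paper does not prove this proposition; it is stated without argument and attributed to \cite{DL14}. So there is no proof in the paper to compare against, and I can only assess your plan on its own.

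Your argument for the first bullet is correct.

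For the second and third bullets there are genuine gaps. First, your ``key extra input'' --- that $\WP^n(\mv m)\simeq\C\mathrm P^n$ iff $\mv m$ becomes $(1,\ldots,1)$ after dividing out the overall $\gcd$ --- is false: the paper itself records $\WP^1(\ell_0,\ell_1)\simeq\C\mathrm P^1$ for \emph{all} $\ell_0,\ell_1$, and e.g.\ $(1,2)$ is already coprime but not $(1,1)$. The actual classification requires reducing to a \emph{well-formed} weight (no $n$ of the $n+1$ entries share a common prime), which is a finer operation than dividing by the global $\gcd$. Second, the step ``since $\mv m$ is coprime, by bullet one $\mv m$ is pairwise coprime'' misreads bullet one (which concerns $\mv m^\sharp$, not $\mv m$); for instance $\mv\ell=(1,2,3)$ gives $\mv\ell^\sharp=(6,3,2)$, which is coprime but not pairwise coprime. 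Third, and most seriously, you repeatedly appeal to ``the result of \cite{DL14} recalled immediately before the proposition'', but that sentence in the paper is not an independent fact --- it is the announcement of Prop.~\ref{prop:classical} itself. So your deductions of bullets two and three are circular.

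To rescue the plan you would need the correct form of the classification and then to compute the well-formed reduction of $\mv\ell^\sharp$ explicitly and show it equals $(1,\ldots,1)$ precisely when $\mv\ell$ is pairwise coprime; the identity $(\mv\ell^\sharp)^\sharp=k\mv\ell$ by itself does not deliver this.
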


\subsubsection{Quantum lens spaces and weighted projective spaces}
The definition of quantum weighted projective spaces (``QWP spaces'', from now on) and lens spaces is straightforward. One simply replaces $S^{2n+1}$ by $S^{2n+1}_q$ (Example \ref{ex:Sqn}) and proceeds as in the classical case.

Let $\mv{\ell}$ be a weight vector.
An action $\alpha:U(1)\to\mathrm{Aut}\,\mc{O}( S^{2n+1}_q)$ is defined on generators by:
$$
\alpha_t(z_i):=t^{\ell_i}z_i \;\forall\;i=0,\ldots,n,\;t\in U(1).
$$
Let $p\geq 1$ and $\Z_p\subset U(1)$ the subgroup of $p$-th roots of unity. 
We define the algebras of QWP and lens spaces as fixed point subalgebras for the action of $U(1)$ and $\Z_p$ respectively:
$$
\mc{O}(L_q^n(p;\mv{\ell})):=\mc{O}( S^{2n+1}_q)^{\Z_p}
\;,\quad\qquad
\mc{O}(\WP^n_q(\mv{\ell})):=\mc{O}( S^{2n+1}_q)^{U(1)}
\;.
$$
As a special case,
$\WP^n_q(1,\ldots,1)=\C\mathrm{P}_q^n$
are the standard quantum projective spaces studied for example in \cite{DD09,DL10}.

Next theorem appeared first in \cite{DL14} for a special class of weights; then it was observed in \cite{BF15} that the same proof applies to a more general case, cf.~point (ii) of the theorem below. The statement in point (i) is also more general than the one appearing in \cite{DL14}.

\begin{thm}[\cite{DL14,BF15}]
Let $\mv{\ell}$ be any weight, and
set $p:=\ell_0\ell_1\cdots\ell_n$. The following extensions are Hopf-Galois:\vspace{5pt}
\begin{center}
\begin{tabular}{cp{7mm}c}
(i)\quad
$\mc{O}(\WP^n_q(\mv{\ell}^\sharp))\hookrightarrow\mc{O}(L_q^n(p;\mv{\ell}^\sharp))$ &&
(ii)\quad
$\mc{O}(\WP^n_q(\mv{\ell}))\hookrightarrow
\mc{O}(L_q^n(p;\mv{\ell}))$ \\[3pt]
{\scriptsize[FD \& G.~Landi, 2015]} &&
{\scriptsize[T. Brzezi{\'n}ski \& S.A. Fairfax, 2015]}
\end{tabular}
\end{center}
\end{thm}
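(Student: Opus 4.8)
The plan is to reduce both extensions to the strongly $\Z$-graded situation of Theorem~\ref{thm:1.22}. Write $\mv{w}$ for the weight vector at play --- $\mv{w}=\mv{\ell}$ in case (ii), $\mv{w}=\mv{\ell}^\sharp$ in case (i) --- and observe that in \emph{both} cases $w_i\mid p$ for all $i$, with $k_i:=p/w_i$ a positive integer (equal to $\ell_i$ in case (i), and to $\ell_i^\sharp=\prod_{j\neq i}\ell_j$ in case (ii)). Since the $U(1)$-action $\alpha$ on $\mc{O}(S^{2n+1}_q)$ is diagonalisable, $\mc{O}(S^{2n+1}_q)=\bigoplus_{d\in\Z}\mc{O}(S^{2n+1}_q)_d$ by $\mv{w}$-degree, and taking $\Z_p$-invariants keeps exactly the parts with $p\mid d$. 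Hence the residual action of $U(1)/\Z_p\cong U(1)$ makes $A:=\mc{O}(L^n_q(p;\mv{w}))=\bigoplus_{m\in\Z}A_m$ a $\Z$-graded algebra with $A_m=\mc{O}(S^{2n+1}_q)_{mp}$; in particular $z_i^{k_i}\in A_1$, $(z_i^*)^{k_i}\in A_{-1}$, and $A_0=\mc{O}(\WP^n_q(\mv{w}))$. By Theorem~\ref{thm:1.22} the extension $A_0\hookrightarrow A$ is Hopf-Galois iff $A$ is strongly $\Z$-graded, which by Lemma~\ref{lemma:1.27} means $A_1A_{-1}=A_{-1}A_1=A_0$. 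As $A_{-1}A_0\subset A_{-1}$ and $A_0A_1\subset A_1$, both $A_1A_{-1}$ and $A_{-1}A_1$ are two-sided ideals of $A_0$, so it suffices to produce a resolution of the unit
$$
1=\sum\nolimits_\alpha\xi_\alpha\eta_\alpha\;,\qquad\xi_\alpha\in A_1,\ \eta_\alpha\in A_{-1}\;,
$$
together with the mirror one with the roles of $A_1$ and $A_{-1}$ exchanged.

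The construction of such resolutions is the substantive point, and it is where $w_i\mid p$ is used. The natural building blocks are the elements $P_i:=z_i^{k_i}(z_i^*)^{k_i}\in A_1A_{-1}\subset A_0$: classically $P_i=|z_i|^{2k_i}$, and since the sphere relation $\sum_iz_iz_i^*=1$ forbids all the $z_i$ from vanishing at once, the $P_i$ have no common zero, so geometrically they generate the unit ideal of $A_0$ --- equivalently, the line bundle $\mc{O}(p)\to\WP^n(\mv{w})$ is globally generated precisely because $w_i\mid p$. To upgrade this to an algebraic identity I would start from $1=\bigl(\sum_iz_iz_i^*\bigr)^M$ with $M:=1+\sum_i(k_i-1)$, expand, and use that the elements $z_iz_i^*$ commute pairwise to regroup each resulting monomial so that some index $i$ contributes at least $k_i$ factors $z_iz_i^*$; these are then converted into an element of $A_0P_iA_0$ by repeated use of $[z_i^*,z_i]=(1-q^2)\sum_{j>i}z_jz_j^*$ (together with the clean relation $[z_n^*,z_n]=0$, which gives $z_n^{k_n}(z_n^*)^{k_n}=(z_nz_n^*)^{k_n}$ outright). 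The lower-order terms thrown off in this process involve only $z_{i+1},\dots,z_n$ and their adjoints, so the whole regrouping can be organised as a downward induction, the base case being trivial.

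The mirror resolution $1\in A_{-1}A_1$ is obtained in the same way, starting from $1=\bigl(\sum_iz_i^*z_i\bigr)^{M}$ --- or, if one prefers, by applying the $*$-involution, which interchanges $A_1$ and $A_{-1}$; because the relations of $\mc{O}(S^{2n+1}_q)$ are not $*$-symmetric, the two computations are genuine mirror images of one another rather than literally the same. I expect the main obstacle to be exactly the explicit regrouping of the previous paragraph: although the classical geometric picture makes the statement obvious, at the level of the coordinate $*$-algebra one must manufacture honest identities, and the degree-zero monomials that cannot be split into a degree-$p$ subword times a degree-$(-p)$ subword --- for instance $z_0^{14}z_1(z_0^*)^{14}z_1^*$ when $\mv{w}=(6,15)$ and $p=90$ --- must be pushed into $A_1A_{-1}$ through the sphere and commutation relations, which is the bookkeeping carried out (in the stated generality) in \cite{DL14,BF15}. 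The merit of the approach above is that it treats cases (i) and (ii) uniformly, the only input specific to the choice $\mv{w}=\mv{\ell}$ or $\mv{w}=\mv{\ell}^\sharp$ being the divisibility $w_i\mid p$.
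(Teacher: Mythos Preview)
Your reduction to a strongly $\Z$-graded algebra via Lemma~\ref{lemma:1.27}, the identification of $z_i^{k_i}\in A_1$ and of the elements $P_i=z_i^{k_i}(z_i^*)^{k_i}\in A_1A_{-1}$, and the uniform treatment of the two cases through the divisibility $w_i\mid p$ all match the paper exactly.

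The point of departure is the technical step that produces $1$ from the $P_i$. You propose to expand $\bigl(\sum_i z_iz_i^*\bigr)^M$, invoke a pigeonhole argument on the exponents, and then reorganise each monomial into $A_0P_iA_0$ by a downward induction that pushes the correction terms to higher indices. The paper bypasses this bookkeeping entirely: it observes that the elements $x_i:=z_iz_i^*$ generate a genuine \emph{commutative polynomial} subalgebra $R\simeq\C[x_1,\ldots,x_n]$ of $A_0$ (with $x_0=1-x_1-\ldots-x_n$), computes the explicit closed form
\[
P_i=\prod_{k=0}^{k_i-1}\Bigl\{x_i+(1-q^{-2k})\sum_{j>i}x_j\Bigr\}\in R,
\]
checks that these polynomials have no common zero in $\C^n$, and then applies Hilbert's weak Nullstellensatz to conclude $1\in\langle P_0,\ldots,P_n\rangle_R$. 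The classical ``no common zero'' heuristic you invoke is thus not merely motivation: it \emph{is} the proof, because the relevant subalgebra is classical. Your hands-on induction can certainly be made to work (indeed the product formula above shows $P_i-x_i^{k_i}$ is homogeneous of degree $k_i$ supported on $x_i,\ldots,x_n$ with $x_i$-degree $\leq k_i-1$, which is exactly the shape your descent needs), but recognising the polynomial subalgebra and importing Nullstellensatz is both shorter and conceptually cleaner: it turns the ``explicit regrouping'' you flag as the main obstacle into a one-line appeal to classical algebraic geometry.
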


\noindent
\textit{Sketch of the proof.}
Let $A:=\mc{O}(L_q^n(p;\mv{\ell}^\sharp))$ in case (i) and $A:=\mc{O}(L_q^n(p;\mv{\ell}))$ in case (ii).
Call:
$$
A_n:=\big\{a\in A:\alpha_t(a)=t^{pn}a\;\forall\;t\in U(1)\big\}
$$
(these are the only non-zero weight spaces of $U(1)$ in $A$),
and note that
$A_0\equiv\mc{O}(\WP^n_q(\mv{\ell}^\sharp))$ in case (i) and $A_0\equiv\mc{O}(\WP^n_q(\mv{\ell}))$ in case (ii).
We want to prove that $A$ is strongly $\Z$-graded, which by Lemma \ref{lemma:1.27} means proving that $A_1A_{-1}$ and $A_{-1}A_1$ contain the unit element of $A$.
For the proof we need a basic result from algebraic geometry:

\begin{lemma}[Hilbert's weak Nullstellensatz]\label{lemma:Null}
An ideal $I\subset R:=\C[x_1,\ldots,x_n]$ contains $1$ if and only if the polynomials in $I$ do not have common zeros, 
i.e. $\mathcal{Z}(I)=\emptyset$. (In other words, the only ideal representing the empty variety is $R$.)
\end{lemma}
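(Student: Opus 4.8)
The plan is to establish the nontrivial implication in contrapositive form: if $I \subsetneq R$ is a \emph{proper} ideal (equivalently $1\notin I$, since $I$ is an ideal), then $\mathcal{Z}(I)\neq\emptyset$. The reverse direction is immediate --- the constant $1$ has no zeros, so $1\in I$ forces $\mathcal{Z}(I)=\emptyset$ --- so the implication above is the whole content.

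First I would reduce to the case of a maximal ideal: every proper ideal $I$ sits inside some maximal ideal $\mathfrak{m}$ (by Zorn's lemma, or since $R$ is Noetherian), and $\mathcal{Z}(\mathfrak{m})\subseteq\mathcal{Z}(I)$, so it suffices to find a common zero of $\mathfrak{m}$. The key claim is then that every maximal ideal of $R=\C[x_1,\ldots,x_n]$ is of the form $\mathfrak{m}_a:=(x_1-a_1,\ldots,x_n-a_n)$ for some $a=(a_1,\ldots,a_n)\in\C^n$; granting this, $a\in\mathcal{Z}(\mathfrak{m}_a)=\mathcal{Z}(\mathfrak{m})\subseteq\mathcal{Z}(I)$ and we are done. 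To prove the claim, look at the field $K:=R/\mathfrak{m}$ with its canonical embedding $\C\hookrightarrow K$. Because $K$ is generated as a $\C$-algebra by the images $\bar x_1,\ldots,\bar x_n$ of the coordinates, it is a finitely generated $\C$-algebra which is also a field; invoking Zariski's lemma --- a field which is finitely generated as an algebra over a field $k$ is a finite, hence algebraic, extension of $k$ --- together with the fact that $\C$ is algebraically closed, we conclude $K=\C$. Then $a_i:=\bar x_i\in\C$ defines a point $a\in\C^n$ with $x_i-a_i\in\mathfrak{m}$ for every $i$, so $\mathfrak{m}_a\subseteq\mathfrak{m}$; since $R/\mathfrak{m}_a\cong\C$ is a field, $\mathfrak{m}_a$ is itself maximal, forcing $\mathfrak{m}_a=\mathfrak{m}$.

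The main obstacle is Zariski's lemma itself. As only the case $k=\C$ is needed here, I would use the short argument that bypasses Noether normalization: a finitely generated $\C$-algebra has at most countable dimension as a $\C$-vector space, whereas if $K\supsetneq\C$ then, $\C$ being algebraically closed, $K$ must contain an element $t$ transcendental over $\C$, and the uncountable family $\{(t-\lambda)^{-1}:\lambda\in\C\}\subset K$ is $\C$-linearly independent (a standard partial-fractions argument inside $\C(t)$), a contradiction. Alternatively one can run Noether normalization, or the Artin--Tate lemma, or an induction on $n$ using resultants to eliminate one variable at a time; any of these supplies the single substantial input, and all remaining steps are routine bookkeeping.
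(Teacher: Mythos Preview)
Your argument is correct: the reduction to maximal ideals, the identification of maximal ideals of $\C[x_1,\ldots,x_n]$ with points of $\C^n$ via Zariski's lemma, and the uncountability trick for the $\C$-specific case are all standard and valid, and together they give a complete proof of the weak Nullstellensatz.

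There is nothing to compare against, however: the paper does not prove this lemma. It is merely quoted as ``a basic result from algebraic geometry'' and then used as a black box in the proof of the Hopf--Galois property for quantum lens spaces. So your write-up actually supplies something the paper omits. If the goal is only to reproduce the paper's treatment, a one-line citation would suffice; if the goal is a self-contained account, what you have is exactly the right kind of argument, and the cardinality shortcut (avoiding Noether normalization) is a good choice given that only the algebraically closed, uncountable ground field $\C$ is needed here.
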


With this, one proves the theorem in three steps:
\begin{enumerate}\itemsep=2pt
\item
Check that the elements $x_i:=z_iz_i^*$ belong to a commutative subalgebra $R\simeq\C[x_1,\ldots,x_n]$ of $A_0$ (in particular $x_0=1-x_1-x_2\ldots-x_n\in R$).

\item
Prove by induction that $z_{\smash{i}}^{\ell_i}(z_i^*)^{\ell_i}=P_i(x_1,\ldots,x_n)$ are polynomials with no common zeroes.

\item
The ideal $\langle P_0,\ldots,P_n\rangle\subset R$ is then trivial (Lemma \ref{lemma:Null}), i.e.~$\exists\;a_0,\ldots,a_n\in R$ s.t.
$$
\sum\nolimits_ia_iP_i=
\sum\nolimits_ia_iz_{\smash{i}}^{\ell_i}
(z_i^*)^{\ell_i}=
1 \;.
$$
Since $a_iz_{\smash{i}}^{\ell_i}\in A_1$ and
$(z_i^*)^{\ell_i}\in A_{-1}$,
this proves $A_1A_{-1}=A_0\,$. Similarly one proves $A_{-1}A_1=A_0$. \qed
\end{enumerate}

\begin{es}
Using the defining relations in Example \ref{ex:Sqn}, prove that
$$
P_i:=z_{\smash{i}}^{\ell_i}(z_i^*)^{\ell_i}=
\prod_{k=0}^{\ell_i-1}\Big\{x_i+(1-q^{-2k})\sum\nolimits_{j>i}x_j\Big\}
$$
for all $i=0,\ldots,n$.
With this, prove by induction on $k$ (from $n$ to $0$) that $P_k,\ldots,P_n$ have no common zeros.
\end{es}


\section{Quasi-associative algebras and cochain quantization}

In the first part of these lectures I recalled the definitions of
Hopf-algebra, algebra module, etc. over a ground field $\Bbbk$. These definitions remain valid when the ground field is replaced by a commutative (unital) ring $R$. In this section, we will be mainly interested in the ring $R=\C\lbrak\hbar\rbrak$ of formal power series in $\hbar$.
In this case, by a Hopf algebra over $\C\lbrak\hbar\rbrak$ I shall always mean a \emph{topological} Hopf algebra, completed in the h-adic topology, and by $\otimes$ the completed tensor product over $\C\lbrak\hbar\rbrak$ (see e.g.~\cite[\S4.1A]{CP94}).

\subsection{Group cohomology and twists}
Let us start by recalling the definition of twisted group algebra.
For the analogous construction in the framework of $C^*$-algebras one can see \cite[\S III.1.7]{Lan98}, where the case of unimodular Lie groups is discussed, and
\cite[App.~D.3]{Wil07} for general locally compact groups.

\begin{df}
Let $G$ be a group and $F:G\times G\to\Bbbk$ a map. The \emph{twisted group algebra} $\Bbbk_FG$ has underlying vector space isomorphic to $\Bbbk G$ and product defined on basis elements by
\begin{equation}\label{eq:twist}
g\ast h=F(g,h) g\cdot h \;,
\end{equation}
where the one on the right hand side is the product of $G$.
\end{df}

One can easily work out the necessary and sufficient condition for which \eqref{eq:twist} is associative, obtaining the definition of $2$-cocycle in the group cohomology of $G$. The product \eqref{eq:twist} is associative if{}f
\begin{equation}\label{eq:2cocycle}
F(a,b)F(ab,c)=F(a,bc)F(b,c) \;,\qquad\forall\;a,b,c\in G,
\end{equation}
and $1\in\Bbbk G$ is the neutral element of the twisted product if{}f
\begin{equation}\label{eq:unital}
F(1,g)=F(g,1)=1 \;,\qquad\forall\;g\in G.
\end{equation}
A function $F:G\times G\to\Bbbk\smallsetminus\{0\}$ satisfying \eqref{eq:2cocycle} is called a \emph{group $2$-cocycle}, since this condition can be interpreted as the vanishing of the coboundary $\partial F$ of $F$ in the group cohomology of $G$, given by:
\begin{equation}\label{eq:3cocycle}
\partial F(a,b,c)=\frac{F(a,bc)F(b,c)}{F(a,b)F(ab,c)} \;.
\end{equation}

More generally, if $C^n$ is the (Abelian)
group of functions $\overbrace{G\times\ldots\times G}^{n\text{ times}}\to\Bbbk\smallsetminus\{0\}$, with pointwise product, a coboundary operator $C^n\to C^{n+1}$ is given by \cite[\S2.3]{Maj95}:
$$
\partial\varphi(g_1,\ldots,g_{n+1})=
\prod_{i=0}^{n+1}\varphi(g_1,\ldots,g_i\cdot g_{i+1},\ldots,g_{n+1})^{(-1)^i} \;,
$$
where, by convention, the $i=0$ factor is $\varphi(g_2,\ldots,g_{n+1})$ and the $i=n+1$ factor is $\varphi(g_1,\ldots,g_n)$.

\begin{df}
A function $F$ satisfying \eqref{eq:2cocycle} and \eqref{eq:unital} is called a \emph{unital} $2$-cocycle, or a \emph{multiplier}.
The algebra $\Bbbk_FG$ will be called a \emph{cocycle quantization} or \emph{cocycle twist} of $\Bbbk G$.
\end{df}

\begin{ex}[Noncommutative torus]\label{ex:2.3}
Let $G=\Z^2$. Via Fourier transform, we can think of the algebra $\Bbbk G$ with convolution product as a dense subalgebra of $C(\mathbb{T}^2)$. A linear basis is given by $\{U^mV^n\}_{m,n\in\Z}$, with $U,V$ the generators of $\Z^2$. For $\theta\in\R$,
a unital $2$-cocycle is given by
$$
F(U^jV^k,U^mV^n):=e^{\pi i\theta (jn-km)} \;.
$$
for all $j,k,m,n\in\Z$.
The algebra $\Bbbk_FG$ is generated by $U,V$ with relation
$$
U*V=e^{2\pi i\theta}V*U \;,
$$
and it is a $*$-algebra with involution $U^*=U^{-1}$ and $V^*=V^{-1}$. The $C^*$-algebra $C(\T^2_\theta)$ of the noncommutative torus, see e.g.~\cite[\S4]{Var06}, can be defined as the closure of $\Bbbk_FG$ in the operator norm coming from the representation on $L^2(S^1)$ given by $Uf(z):=f(e^{2\pi i\theta}z)$, $Vf(z):=zf(z)$ (hence $C(\T^2_\theta)$ can be defined as a crossed product $C(S^1)\rtimes\Z$).
\end{ex}

\smallskip

In general, when \eqref{eq:2cocycle} is not satisfied, one gets a non-associative algebra, whose non-associativity is ``controlled'' by the $3$-cocycle $\Phi=\partial F$. That is
$$
a*(b*c)=\Phi(a,b,c)\,(a*b)*c \;,\qquad\forall\;a,b,c\in G.
$$
There are interesting examples where \eqref{eq:2cocycle} is not satisfied, a celebrated one is the octonion example of Albuquerque and Majid \cite{AM99,AM98}.
I will review this example in the next section.

I will then recall -- in \S\ref{sec:Hopf} -- the generalization of the theory of cocycle and cochain quantization to Hopf algebras. Here the main difference is that one has a sequence of non-Abelian groups, and the map $\partial$ is a group homomorphism and a coboundary operator only in the Abelian case (that is, for commutative Hopf algebras). It is possible, nevertheless, to give a cohomological interpretation to deformation theory via Hopf algebras in the framework of \emph{non-Abelian cohomology} \cite{Gir71}. This is recalled in \S\ref{sec:nonAb}.

Then, I will present some results from \cite{DF15} about the noncommutative torus, namely how to derive its modules from a non-associative deformation of a Heisenberg manifold.

\subsection{Octonions}
(From \cite{AM99,AM98})
Let us use a multiplicative notation for the 
group $\Z_2$, hence $\Z_2=\{+1,-1\}$.
Consider the group algebra $\R[\Z_2^3]$. As an abstract algebra, this has a linear
basis given by elements $1,e_1,\ldots,e_7$ and the multiplication is uniquely determined by
the rules:
\begin{itemize}
\item $1$ is the unit element;
\item $a^2=1$ for any point of the Fano plane (Fig.~\ref{fig1});
\item for any two distinct points $a,b$ of the Fano plane, the product $ab=c$ is given by the
(unique) element $c$ lying on the same line or circle with $a$ and $b$.
\end{itemize}
An isomorphism between this abstract algebra and $\R[\Z_2^3]$ is given by
\begin{align*}
e_0:=1 &\mapsto (1,1,1) \;,&
e_1 &\mapsto (-1,1,1) \;,&
e_2 &\mapsto (1,-1,1) \;,&
e_3 &\mapsto (-1,-1,1) \;,\\
e_4 &\mapsto (-1,-1,-1) \;,&
e_5 &\mapsto (1,-1,-1) \;,&
e_6 &\mapsto (-1,1,-1) \;,&
e_7 &\mapsto (1,1,-1) \;.
\end{align*}

On the other hand the octonion algebra $\mathbb{O}$ has vector space basis $1,e_1,\ldots,e_7$ over $\R$ , and
the product is uniquely determined by the rules:
\begin{itemize}
\item $1$ is the unit element;
\item $a^2=-1$ for any vertex of the oriented graph in Fig.~\ref{fig2};
\item $ab=-ba=c$ for any ordered triple $(a,b,c)$ of vertices lying on the same line or circle of the graph,
and for any cyclic permutation of such a triple.
\end{itemize}

\begin{figure}[t]
  \centering
  \subfloat[Fano plane]{
  \begin{tikzpicture}

\def \radius {1.5cm}
\def \margin {15}

\draw[gray!60,thick] ({30+\margin}:\radius) 
    arc ({30+\margin}:{150-\margin}:\radius);
\node[draw, circle] at (30:\radius) {$e_2$};

\draw[gray!60,thick] ({150+\margin}:\radius) 
    arc ({150+\margin}:{270-\margin}:\radius);
\node[draw, circle] at (150:\radius) {$e_1$};

\draw[gray!60,thick] ({270+\margin}:\radius) 
    arc ({270+\margin}:{390-\margin}:\radius);
\node[draw, circle] at (270:\radius) {$e_3$};

\coordinate (e1) at (150:\radius);
\coordinate (e2) at (30:\radius);
\coordinate (e3) at (270:\radius);
\coordinate (e4) at (0,0);
\coordinate (e5) at (330:2*\radius);
\coordinate (e6) at (210:2*\radius);
\coordinate (e7) at (90:2*\radius);

\node[draw, circle] at (e4) {$e_4$};
\node[draw, circle] at (e5) {$e_5$};
\node[draw, circle] at (e6) {$e_6$};
\node[draw, circle] at (e7) {$e_7$};

\draw[thick,style={shorten <=11pt,shorten >=11pt}] (e6)--(e1);
\draw[thick,style={shorten <=11pt,shorten >=11pt}] (e1)--(e7);
\draw[thick,style={shorten <=11pt,shorten >=11pt}] (e7)--(e2);
\draw[thick,style={shorten <=11pt,shorten >=11pt}] (e2)--(e5);
\draw[thick,style={shorten <=11pt,shorten >=11pt}] (e5)--(e3);
\draw[thick,style={shorten <=11pt,shorten >=11pt}] (e3)--(e6);
\draw[thick,style={shorten <=11pt,shorten >=11pt}] (e2)--(e4);
\draw[thick,style={shorten <=11pt,shorten >=11pt}] (e4)--(e6);
\draw[thick,style={shorten <=11pt,shorten >=11pt}] (e3)--(e4);
\draw[thick,style={shorten <=11pt,shorten >=11pt}] (e4)--(e7);
\draw[thick,style={shorten <=11pt,shorten >=11pt}] (e1)--(e4);
\draw[thick,style={shorten <=11pt,shorten >=11pt}] (e4)--(e5);

\end{tikzpicture}
  \label{fig1} }
	\hspace{1.2cm}
  \subfloat[Octonions multiplication]{
\begin{tikzpicture}

\tikzset{>=stealth}

\def \radius {1.5cm}
\def \margin {15}

\node[draw, circle] at (30:\radius) {$e_2$};
\draw[gray!60,thick,<-] ({30+\margin}:\radius) 
    arc ({30+\margin}:{150-\margin}:\radius);

\node[draw, circle] at (150:\radius) {$e_1$};
\draw[gray!60,thick,<-] ({150+\margin}:\radius) 
    arc ({150+\margin}:{270-\margin}:\radius);

\node[draw, circle] at (270:\radius) {$e_3$};
\draw[gray!60,thick,<-] ({270+\margin}:\radius) 
    arc ({270+\margin}:{390-\margin}:\radius);

\coordinate (e1) at (150:\radius);
\coordinate (e2) at (30:\radius);
\coordinate (e3) at (270:\radius);
\coordinate (e4) at (0,0);
\coordinate (e5) at (330:2*\radius);
\coordinate (e6) at (210:2*\radius);
\coordinate (e7) at (90:2*\radius);

\node[draw, circle] at (e4) {$e_4$};
\node[draw, circle] at (e5) {$e_5$};
\node[draw, circle] at (e6) {$e_6$};
\node[draw, circle] at (e7) {$e_7$};

\draw[thick,->,style={shorten <=11pt,shorten >=11pt}] (e6)--(e1);
\draw[thick,->,style={shorten <=11pt,shorten >=11pt}] (e1)--(e7);
\draw[thick,->,style={shorten <=11pt,shorten >=11pt}] (e7)--(e2);
\draw[thick,->,style={shorten <=11pt,shorten >=11pt}] (e2)--(e5);
\draw[thick,->,style={shorten <=11pt,shorten >=11pt}] (e5)--(e3);
\draw[thick,->,style={shorten <=11pt,shorten >=11pt}] (e3)--(e6);
\draw[thick,->,style={shorten <=11pt,shorten >=11pt}] (e2)--(e4);
\draw[thick,->,style={shorten <=11pt,shorten >=11pt}] (e4)--(e6);
\draw[thick,->,style={shorten <=11pt,shorten >=11pt}] (e3)--(e4);
\draw[thick,->,style={shorten <=11pt,shorten >=11pt}] (e4)--(e7);
\draw[thick,->,style={shorten <=11pt,shorten >=11pt}] (e1)--(e4);
\draw[thick,->,style={shorten <=11pt,shorten >=11pt}] (e4)--(e5);

\end{tikzpicture}
  \label{fig2} }
\vspace{-5pt}\caption{}
\end{figure}

\noindent
Let us identify the vector spaces underlying the algebras $\R[\Z_2^3]$ and $\mathbb{O}$, denote by $\cdot$ the product
of the former algebra and by $\ast$ the product of the latter. It is clear from the mnemonic rules above that,
for all $i,j=0,\ldots,7$:
\begin{equation}\label{eq:twistedgroupalgebra}
e_i\ast e_j=F(e_i,e_j)e_i\cdot e_j \;,
\end{equation}
where $F(e_i,e_j)\in\{\pm 1\}$ is a sign. 
The non-associativity of $\mathbb{O}$ implies that $F$ is not a $2$-cocycle.

Let $H=\R^{\Z_2^3}$ be the Hopf algebra dual to $\R[\Z_2^3]$. We can interpret $F$ as a twisting element in $H\otimes H$, cf.~\S\ref{sec:Hopf}. Since $H$ is commutative (and cocommutative) its deformation with $F$ coincide with $H$ itself, and $\mathbb{O}\simeq \R_F[\Z_2^3]$ is a $H$-module algebra too.
In fact, one can interpret $\mathbb{O}$ as a monoid in the category of representations of the Hopf algebra $H$, cf.~\cite{AM99,AM98,BM05,BM10}.

\subsection{Hopf cohomology}\label{sec:Hopf}
This section and \S\ref{sec:cochain} are devoted to
Drinfeld twists and deformations of Hopf algebras/module algebras \cite{Dri90,Dri91}.
Most of this material can be found in \cite{Maj95}.

When we talk about chain complexes, we usually think of a sequence of $\Z$-modules, i.e.~Abelian groups. Things become more complicated when non-Abelian groups are involved.

\begin{df}\label{def:chain}
A cochain complex of (possibly non-Abelian) groups $(\mathbb{G}_\bullet,\de_\bullet)$ is given by
a sequence of groups
$$
\mathbb{G}_1\xrightarrow{\;\de_1\;}\mathbb{G}_2\xrightarrow{\;\de_2\;}\ldots\xrightarrow{\de_{n-1}}\mathbb{G}_n\xrightarrow{\;\de_n\;}\ldots
$$
where each $\de_i$ is a group homomorphism and $\de_{i+1}\circ \de_i$ is the trivial homomorphism.
Using a multiplicative notation:
$\de_\bullet$ is a coboundary operator if $\de_{i+1}\circ \de_i=1$, i.e.~$\de_{i+1}\big( \de_i(a)\big)=1\;\forall\;a\in\mathbb{G}_i$.
Let $Z_i=\{a\in\mathbb{G}_i:\de_i(a)=1\}$ and $B_i=\mathrm{Im}(\de_{i-1})$. The set
$Z_i/B_i$ of all left cosets is a group (the $i$-th cohomology group)
if and only if $B_i$ is a normal subgroup of $Z_i$, which is trivially true in
the Abelian case.
\end{df}

Let $H$ be a Hopf algebra, with coproduct $\Delta$ and counit $\epsilon$ (in fact, it would be enough
for $H$ to be a bialgebra).
For $i=1,\ldots,n$, let $\epsilon_i:H^{\otimes n}\to H^{\otimes n}$ and $\Delta_i:H^{\otimes n}\to H^{\otimes n+1}$ be the maps
given by the counit and coproduct, respectively, acting on the $i$-th leg of a tensor; let $\Delta_0(h) :=1\otimes h$
and $\Delta_{n+1}(h):=h\otimes 1$ for all $h\in H^{\otimes n}$.

Let $\mathbb{G}_n$ be the multiplicative group of invertible elements of $H^{\otimes n}$
(they are Abelian groups if $H$ is commutative) and $\partial:\mathbb{G}_n\to \mathbb{G}_{n+1}$
the map given by
\begin{equation}\label{eq:mappartial}
\partial h=(\partial_+h)(\partial_-h^{-1})
\end{equation}
where
\begin{align*}
\partial_+h &:=
\prod_{\substack{0\leq i\leq n+1 \\[2pt] i\text{ even}}}\Delta_i(h)=\Delta_0(h)\Delta_2(h)\ldots
\\[3pt]
\partial_-h &:=
\prod_{\substack{0\leq i\leq n+1 \\[2pt] i\text{ odd}}}\Delta_i(h)=\Delta_1(h)\Delta_3(h)\ldots
\end{align*}
(The products is in increasing order from the left to the right.)
Recall that we use a multiplicative notation: the unit element of $\mathbb{G}_n$ is $1=1^{\otimes n}$, so an $n$-cochain $h$ is a $n$-cocycle if $\partial h=1^{\otimes n+1}$.
An $n$-cochain $h$ is called: \emph{counital} if $\epsilon_i(h)=1\;\forall\;i=1,\ldots,n$;
\emph{invariant} if it commutes with all the elements in the range of the iterated coproduct.

\smallskip

It is useful to introduce a compact notation.
For all $n\geq m$ and $1\leq i_1<i_2<\ldots<i_m\leq n$, I will denote by $h\mapsto h_{i_1\ldots i_m}$ the
linear map $H^{\otimes m}\to H^{\otimes n}$ defined on homogeneous tensors
$h=a_1\otimes a_2 \otimes\ldots\otimes a_m$ as follows: we put $a_k$ in the leg
$i_k$ of the tensor product for all $k=1,\ldots m$, and fill the additional
$n-m$ legs with $1$. The subscript $(i_ki_{k+1})$ in brackets means
that we apply the coproduct to the $k$-th factor, and put the first leg of the
coproduct in position $i_k$ and the second leg in position $i_{k+1}$.

So for example, if $h=a\otimes b$, $h_{13}=a\otimes 1\otimes b$,
$h_{(12)3}=\Delta(a)\otimes b$, $h_{1(234)}=a\otimes(\Delta\otimes\id)\Delta(b)$
(the notation for the iterated coproduct is self-explanatory, and justified by its
coassociativity).
With this notation:
\begin{subequations}
\begin{align}
\partial h &=h_1h_2(h^{-1})_{(12)} \;, && \forall\;h\in \mathbb{G}_1, \label{eq:DEsubA} \\[3pt]
\partial h &=h_{23}h_{1(23)}(h^{-1})_{(12)3}(h^{-1})_{12} \;, && \forall\;h\in \mathbb{G}_2, \label{eq:DEsubB} \\[3pt]
\partial h &=h_{234}h_{1(23)4}h_{123}(h^{-1})_{(12)34}(h^{-1})_{12(34)} \;. && \forall\;h\in \mathbb{G}_3, \label{eq:DEsubC}
\end{align}
\end{subequations}
which stands for:
\begin{align*}
\partial h &=(h\otimes h)\Delta(h^{-1}) \;, && \forall\;h\in \mathbb{G}_1, \\[3pt]
\partial h &=(1\otimes h)(\id\otimes\Delta)(h)(\Delta\otimes\id)(h^{-1})(h^{-1}\otimes 1) \;, && \forall\;h\in \mathbb{G}_2, \\[3pt]
\partial h &=(1\otimes h)(\id\otimes\Delta\otimes\id)(h)(h\otimes 1)
(\Delta\otimes\id^{\otimes 2})(h^{-1})(\id^{\otimes 2}\otimes\Delta)(h^{-1})
\;. && \forall\;h\in \mathbb{G}_3.
\end{align*}

\medskip

If $H$ is commutative, it is not difficult to prove that $(\mathbb{G}_\bullet,\partial)$ is a cochain complex of Abelian groups,
so that the cohomology groups are well-defined. Indeed for $H$ commutative, if $h$ is an $n$-cochain,
$\partial h=\prod_{i=0}^{n+1}\Delta_i(h^{(-1)^i})$ is a group homomorphism and
$$
\partial^2 h=\prod_{i=0}^{n+2}\prod_{j=0}^{n+1}\Delta_i\Delta_j(h^{(-1)^{i+j}})=1 \;,
$$
where the latter equality follows from the relation for face operators: $\Delta_i\Delta_j=\Delta_{j+1}\Delta_i\;\forall\;i\leq j$.

For a general non-commutative Hopf algebra $H$, not only $\partial^2$ is not $1$, but $\partial$ is
not even a group homomorphism. Nevertheless the first cohomology group is well defined.

\begin{lemma}\label{eq:Hzero}
For any Hopf algebra $H$, the first cohomology group is well defined and given by the group of grouplike elements of $H$:
$\{h\in H:\Delta(h)=h\otimes h\}$.
\end{lemma}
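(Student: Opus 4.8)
The claim involves two assertions: that the first cohomology group is well-defined (i.e., $B_1$ is a normal subgroup of $Z_1$, in the language of Definition~\ref{def:chain}), and that $Z_1/B_1$ is canonically identified with the group of grouplike elements of $H$. The plan is to unwind the definitions. Here $\mathbb{G}_1$ is the group of invertible elements of $H$ itself (with the algebra multiplication), and by \eqref{eq:DEsubA} the coboundary of $h\in\mathbb{G}_1$ is $\partial h=(h\otimes h)\Delta(h^{-1})$. Meanwhile $\mathbb{G}_0$ should be the group of invertible elements of $H^{\otimes 0}=\Bbbk$, i.e.\ $\Bbbk^\times$, and the map $\partial_0:\mathbb{G}_0\to\mathbb{G}_1$ sends a scalar $\lambda$ to $\partial_+\lambda\cdot(\partial_-\lambda^{-1})=\Delta_0(\lambda)\Delta_1(\lambda)^{-1}=\lambda\cdot\lambda^{-1}=1$ — wait, I should be careful: for $n=0$ the legs $0\le i\le n+1=1$ give $\Delta_0(\lambda)=1\otimes\lambda$ but in $H^{\otimes 1}$ this is just multiplication by $\lambda$, and similarly $\Delta_1$. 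So $\partial_0$ is trivial and $B_1=\mathrm{Im}(\partial_0)=\{1\}$. Thus $B_1$ is automatically the trivial subgroup, hence normal, and the first cohomology group is simply $Z_1$ itself.

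The real content is therefore to identify $Z_1=\{h\in\mathbb{G}_1:\partial h=1\otimes 1\}$ with the grouplikes. First I would observe that $\partial h=1$ means $(h\otimes h)\Delta(h^{-1})=1$, i.e.\ $\Delta(h^{-1})=h^{-1}\otimes h^{-1}$ after multiplying by $(h^{-1}\otimes h^{-1})$ on the left; equivalently $\Delta(h)=h\otimes h$ (using that $\Delta$ is an algebra map, so $\Delta(h)\Delta(h^{-1})=1\otimes 1$, and the grouplike condition is clearly stable under inversion). So $Z_1$ is exactly the set of invertible grouplike elements. Then I would note that any grouplike element $g$ is automatically invertible: applying the counit axiom $(\epsilon\otimes\id)\Delta=\id$ to $g$ gives $\epsilon(g)g=g$, and applying $\epsilon\otimes\epsilon$ to $\Delta(g)=g\otimes g$ gives $\epsilon(g)=\epsilon(g)^2$, so $\epsilon(g)\in\{0,1\}$; the case $\epsilon(g)=0$ would force $g=0$, which is not invertible and more to the point $\Delta(0)=0\ne 0\otimes 0$ fails — actually $0\otimes 0=0$ so I should argue instead that $g=\epsilon(g)g$ with $\epsilon(g)=0$ gives $g=0$, and $0$ is conventionally excluded as it is not a unit; so $\epsilon(g)=1$. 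Then $S(g)g=gS(g)=1$ by the antipode axiom $m(S\otimes\id)\Delta=\eta\epsilon=m(\id\otimes S)\Delta$ evaluated on $g$, so $g^{-1}=S(g)$ exists and $g\in\mathbb{G}_1$. Hence the set of grouplikes (excluding $0$) coincides with $Z_1$, and the multiplication is the algebra multiplication on both sides, so this is an isomorphism of groups.

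There are two small points to handle carefully rather than genuine obstacles. The first is the bookkeeping around $\mathbb{G}_0$ and the map $\partial_0$, since the excerpt only writes out $\mathbb{G}_n$ for $n\ge 1$ and the formula \eqref{eq:mappartial} for general $n$; one has to interpret it at $n=0$ and confirm it is trivial, so that $B_1$ is trivial and normality of $B_1$ in $Z_1$ holds for free. The second is the standard but worth-stating fact that a nonzero grouplike element is a unit in $H$, with inverse given by the antipode — this is where the Hopf (as opposed to merely bialgebra) structure enters, ensuring $Z_1$ is literally the full group of grouplikes. Neither step is hard; the proof is essentially a translation exercise, and I expect the author's proof to do exactly this, perhaps phrasing the grouplike-implies-invertible step as ``grouplike elements form a group under multiplication.''
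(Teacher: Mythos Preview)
Your proposal is correct. The paper states this lemma without proof, treating it as an immediate verification, so there is no argument in the paper to compare against; your unwinding of the definitions --- that $B_1$ is trivial (the complex begins at $\mathbb{G}_1$, or equivalently $\partial_0:\Bbbk^\times\to\mathbb{G}_1$ is the trivial map), that $Z_1$ consists of the invertible grouplikes, and that every nonzero grouplike is invertible with inverse $S(g)$ --- is exactly what is needed.
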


\noindent
One can also check that $B_2$ is a subset of $Z_2$, although in general not a subgroup.

\begin{lemma}\label{lemma:2.2}
For any Hopf algebra $H$:
\begin{list}{\quadratini}{\leftmargin=2em \itemsep=2pt \labelsep=6pt}
\item $\partial^2h =1^{\otimes 3}$ for any $h\in \mathbb{G}_1$;
\item $\partial^2h =1^{\otimes 4}$ for any \emph{invariant} $h\in \mathbb{G}_2$.
\end{list}
\end{lemma}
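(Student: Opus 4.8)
The plan is to treat the two items separately: the first reduces to checking that $\partial h$ is a $2$-cocycle, and the second to a computation in $H^{\otimes 4}$ in which the invariance hypothesis is what makes a product of $20$ factors collapse.

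For a $1$-cochain $h$, equation \eqref{eq:DEsubA} gives $F:=\partial h=(h\otimes h)\Delta(h)^{-1}$, and by \eqref{eq:DEsubB} the identity $\partial^2 h=1^{\otimes 3}$ is precisely the cocycle condition $(1\otimes F)(\id\otimes\Delta)(F)=(F\otimes 1)(\Delta\otimes\id)(F)$. I would verify this by direct substitution, using only that $\Delta$ is a unital algebra homomorphism and coassociativity. On the left, $1\otimes F=(1\otimes h\otimes h)(1\otimes\Delta(h)^{-1})$ and $(\id\otimes\Delta)(F)=(h\otimes\Delta(h))\bigl((\id\otimes\Delta)\Delta(h)\bigr)^{-1}$; the two middle factors collapse, $(1\otimes\Delta(h)^{-1})(h\otimes\Delta(h))=h\otimes 1\otimes 1$, so the left-hand side equals $(h\otimes h\otimes h)\bigl((\id\otimes\Delta)\Delta(h)\bigr)^{-1}$. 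The mirror computation, together with $(\Delta\otimes\id)\Delta=(\id\otimes\Delta)\Delta$, shows the right-hand side equals the same element. No invariance is used here; this is the inclusion $B_2\subseteq Z_2$ recorded just above the lemma.

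For the second item, set $g:=\partial h\in\mathbb{G}_3$. A useful preliminary is that $g$ is again invariant. Indeed, by \eqref{eq:DEsubB} one has $g=(1\otimes h)\,(\id\otimes\Delta)(h)\,\bigl((\Delta\otimes\id)(h)\bigr)^{-1}(h^{-1}\otimes 1)$, and each of the four factors commutes with the range of $(\id\otimes\Delta)\Delta=(\Delta\otimes\id)\Delta$: for $(\id\otimes\Delta)(h)$ and $(\Delta\otimes\id)(h)$ this follows by applying $\id\otimes\Delta$, resp.~$\Delta\otimes\id$, to the relation $h\,\Delta(x)=\Delta(x)\,h$; for $1\otimes h$ and $h^{-1}\otimes 1$ one uses $(\id\otimes\Delta)\Delta(x)=\sum x_{(1)}\otimes\Delta(x_{(2)})$, resp.~$(\Delta\otimes\id)\Delta(x)=\sum\Delta(x_{(1)})\otimes x_{(2)}$, together with invariance of $h$ (hence of $h^{-1}$). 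Consequently $g$ lies in the subgroup of invariant elements of $H^{\otimes 3}$. Then $\partial^2 h=\partial g$ is obtained by substituting this expression for $g$ into \eqref{eq:DEsubC} and distributing the unital algebra homomorphisms $1\otimes(-)$, $(-)\otimes 1$, $\id\otimes\Delta\otimes\id$, $\Delta\otimes\id\otimes\id$ and $\id\otimes\id\otimes\Delta$ over the four factors of $g$, using coassociativity to normalize the iterated coproducts that appear. This presents $\partial^2 h$ as an ordered product of $20$ elements of $H^{\otimes 4}$, each of the form $T(h)$ or $T(h^{-1})$ for a composite $T$ of coproducts and unit insertions, and the cosimplicial relations $\Delta_i\Delta_j=\Delta_{j+1}\Delta_i$ for $i\le j$ pair these into $10$ mutually inverse pairs $\{T(h),T(h^{-1})\}$.

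The remaining step, collapsing this product to $1^{\otimes 4}$, is where I expect the real work to lie. In the commutative case the ten pairs cancel after a free reordering; in general one must fix an order of cancellation in which the two members of each pair can be brought next to one another, commuting the intervening factors past them. The commutations needed are all of the type ``$h^{\pm 1}$ occupying two consecutive legs commutes with a factor whose restriction to those legs is a coproduct image'', and these are exactly what the invariance of $h$ (equivalently of $g$) supplies, through identities such as $(\id\otimes\Delta)\Delta(x)=\sum x_{(1)}\otimes\Delta(x_{(2)})$. The main obstacle is therefore the bookkeeping: producing a consistent telescoping order and checking that every commutation it calls for is legitimate. Once such an order is fixed the cancellations are routine and give $\partial^2 h=1^{\otimes 4}$.
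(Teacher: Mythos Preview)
Your proposal is correct and follows essentially the same route as the paper. For the first item, you verify the $2$-cocycle identity for $F=\partial h$, which is literally the same computation the paper does when it expands $\partial^2h$ via \eqref{eq:DEsubA}--\eqref{eq:DEsubB} and cancels; your packaging as ``both sides equal $(h\otimes h\otimes h)\bigl((\id\otimes\Delta)\Delta(h)\bigr)^{-1}$'' is just a cleaner way to present the same telescoping. For the second item, the paper writes out the $20$-term product explicitly and then indicates one sample cancellation (e.g.\ $(h^{-1})_{(23)4}(h^{-1})_{23}h_{(23)4}=(h^{-1})_{23}$) using invariance of $h$, leaving the rest as ``and so on''; your description of the same product as ten cosimplicially-matched inverse pairs, with invariance of $h$ supplying the commutations needed to bring each pair together, is the conceptual reason those cancellations work. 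Your preliminary that $g=\partial h$ is again invariant is correct but not actually needed here---the paper uses invariance of $h$ directly and proves the invariance of $\partial h$ in the lemma that follows.
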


\begin{proof}
From \eqref{eq:DEsubA} and \eqref{eq:DEsubB} and the
coassociativity of $\Delta$, for all $h\in\mathbb{G}_1$:
$$
\partial^2h=
h_2h_3(h^{-1})_{(23)}
h_1h_{(23)}(h^{-1})_{(123)}
h_{(123)}(h^{-1})_3(h^{-1})_{(12)}
h_{(12)}(h^{-1})_2(h^{-1})_1 \;.
$$
After obvious simplifications
$
\partial^2h=
h_2h_3(h^{-1})_{(23)}
h_1h_{(23)}(h^{-1})_3(h^{-1})_2(h^{-1})_1 ,
$
but $h_1$ and $h_{(23)}$ commute, and also $h_1,h_2,h_3$ are mutually commuting, hence the first statement.

The second statement easily follows from the explicit expression
\begin{multline*}
\partial^2h =h_{34}h_{2(34)}(h^{-1})_{(23)4}(h^{-1})_{23}
            h_{(23)4}h_{1(234)}(h^{-1})_{(123)4}(h^{-1})_{1(23)}h_{23}h_{1(23)}
            \\[5pt]
            (h^{-1})_{(12)3}(h^{-1})_{12}
            h_{(12)3}h_{(123)4}(h^{-1})_{(12)(34)}(h^{-1})_{34}
            h_{12}h_{(12)(34)}(h^{-1})_{1(234)}(h^{-1})_{2(34)} \;,
\end{multline*}
by noticing for example that by invariance $(h^{-1})_{(23)4}(h^{-1})_{23}h_{(23)4}=(h^{-1})_{23}$
and so on.
\end{proof}

\noindent
It is not true in general that $\partial^2h =1^{\otimes 4}$ for arbitrary $2$-cochains.
Here is a counterexample.

\begin{ex}\label{ex:2.7}
Let $a,b,c\in H$ be group-like elements with $ab=cba$, $ac=ca$ and $bc=cb$. Then
$$
\partial^2(a\otimes b)=1 \otimes c\otimes c\otimes 1 \;. \vspace{-20pt}
$$
\end{ex}

\vspace{10pt}

\noindent
In particular if $H=\C[SU(2)]$ is the group algebra of $SU(2)$, with linear basis
$\{\delta_g:g\in SU(2)\}$, and $\sigma_1,\sigma_2$ are the first
two Pauli matrices, then $a=\delta_{\sigma_1}$, $b=\delta_{\sigma_2}$
and $c=\delta_{-1}$ satisfy the condition in the example above.
Another concrete example is given by $H=U(\mathfrak{h}_3(\R))\lbrak \hbar\rbrak $, generated by Lie
algebra elements $X,Y,Z$ with $Z$ central and $[X,Y]=Z$; then the triple $a=e^{\hbar X}$,
$b=e^{\hbar Y}$ and $c=e^{\hbar^2Z}$ satisfies the condition in the example above.

\medskip

One can define up to the second cohomology group (for an arbitrary Hopf algebra $H$) by focusing on \emph{invariant} cochains.
Let us denote by $\mathbb{G}_n^{\mathrm{inv}}$ the subgroup of elements of $\mathbb{G}_n$
which are $H$-invariant, i.e.~commute with the image of the iterated coproduct $\Delta^n:H\to H^{\otimes n}$.
Let $Z_n^{\mathrm{inv}}$ be the set of invariant $n$-cocycles and $B_n^{\mathrm{inv}}=\{\partial h: h\in
\mathbb{G}_{n-1}^{\mathrm{inv}}\}$. Then:

\begin{lemma}
$\partial$ maps $\mathbb{G}_n^{\mathrm{inv}}$ into $\mathbb{G}_{n+1}^{\mathrm{inv}}$.
Moreover, $Z_2^{\mathrm{inv}}$ is a group and $B_2^{\mathrm{inv}}$ a normal subgroup.
\end{lemma}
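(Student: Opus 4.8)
The plan is to establish the three assertions in turn — that $\partial$ sends $\mathbb{G}_n^{\mathrm{inv}}$ to $\mathbb{G}_{n+1}^{\mathrm{inv}}$, that $Z_2^{\mathrm{inv}}$ is a group, and that $B_2^{\mathrm{inv}}$ is normal in $Z_2^{\mathrm{inv}}$ — reducing everything to manipulations with the leg notation $h_{i_1\dots i_m}$ and the defining property that an invariant cochain commutes with every element in the image of an iterated coproduct. Two preliminaries: for any subset $S$ of an algebra the invertible elements commuting with $S$ form a subgroup of the units, so each $\mathbb{G}_m^{\mathrm{inv}}$ is genuinely a subgroup of $\mathbb{G}_m$; and, since $\Delta^1=\id$, the group $\mathbb{G}_1^{\mathrm{inv}}$ is precisely the group of invertible \emph{central} elements of $H$. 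For the first assertion I would show that each face map $\Delta_i\colon H^{\otimes n}\to H^{\otimes n+1}$, $0\le i\le n+1$, carries $\mathbb{G}_n^{\mathrm{inv}}$ into $\mathbb{G}_{n+1}^{\mathrm{inv}}$: for $1\le i\le n$ this holds because $\Delta_i$ is an algebra morphism and $\Delta_i\circ\Delta^n=\Delta^{n+1}$ by coassociativity, so $\Delta_i(h)$ commutes with $\Delta_i\bigl(\Delta^n(H)\bigr)=\Delta^{n+1}(H)$; for the two edge maps $\Delta_0(h)=1\otimes h$, $\Delta_{n+1}(h)=h\otimes1$ one uses instead $\Delta^{n+1}=(\id\otimes\Delta^n)\Delta=(\Delta^n\otimes\id)\Delta$ and checks commutativity leg by leg. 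Since $\mathbb{G}_{n+1}^{\mathrm{inv}}$ is a subgroup, $\partial h=(\partial_+h)(\partial_-h^{-1})$, being a product of the elements $\Delta_i(h^{\pm1})$, lies in it.

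The substance is the second assertion, and it rests on one commutation fact: if $k\in\mathbb{G}_2^{\mathrm{inv}}$ then $k_{23}$ commutes with $h_{1(23)}$ and $k_{12}$ commutes with $h_{(12)3}$ for \emph{every} $h\in\mathbb{G}_2$. This is immediate, since $h_{1(23)}=(\id\otimes\Delta)(h)$ is leg-wise a spectator in position $1$ and of coproduct type in positions $2,3$, with which $k$ commutes by invariance, and symmetrically for $k_{12}$ and $h_{(12)3}$. To see that $Z_2^{\mathrm{inv}}$ is closed under products, I would take $h,k\in Z_2^{\mathrm{inv}}$, expand $\partial(hk)$ using that each of $(\cdot)_{23},(\cdot)_{1(23)},(\cdot)_{(12)3},(\cdot)_{12}$ is an algebra morphism, move $k_{23}$ rightwards past $h_{1(23)}$ by the commutation fact, apply the cocycle identity \eqref{eq:DEsubB} for $k$ in the form $k_{23}k_{1(23)}(k^{-1})_{(12)3}=k_{12}$, move $k_{12}$ rightwards past $(h^{-1})_{(12)3}$ again by the commutation fact, cancel $k_{12}(k^{-1})_{12}=1$, and read off $\partial h=1$. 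Closure under inverses is parallel: inverting the cocycle identity for $h$ gives $(h^{-1})_{1(23)}(h^{-1})_{23}=(h^{-1})_{(12)3}(h^{-1})_{12}$, and substituting this into the explicit expression for $\partial(h^{-1})$, after reordering via the $k=h$ case of the commutation fact, makes the product telescope to $1$; together with $1^{\otimes2}\in Z_2^{\mathrm{inv}}$ this gives a subgroup. I expect the only real obstacle here to be bookkeeping: one must track carefully which factors commute, since the two tensor legs of a single $2$-cochain need not commute with each other (so $h_{12}$ and $h_{23}$ generally do not commute), and invariance is used exactly, and only, to justify the two ``moves'' above.

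Finally, for normality I would first record, from \eqref{eq:DEsubA}, that $\partial h=(h\otimes h)\Delta(h)^{-1}$ for central invertible $h$, and that $h\otimes h$ is central in $H^{\otimes2}$; a short computation exploiting this centrality yields $\partial(hk)=(\partial h)(\partial k)$ for $h,k\in\mathbb{G}_1^{\mathrm{inv}}$, so $\partial$ restricts to a group homomorphism $\mathbb{G}_1^{\mathrm{inv}}\to\mathbb{G}_2^{\mathrm{inv}}$ whose image lies in $Z_2^{\mathrm{inv}}$ by the first assertion and Lemma~\ref{lemma:2.2}, and hence $B_2^{\mathrm{inv}}$ is a subgroup of $Z_2^{\mathrm{inv}}$. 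For normality proper, given $F\in Z_2^{\mathrm{inv}}$ and $h\in\mathbb{G}_1^{\mathrm{inv}}$: since $h\otimes h$ is central it commutes with $F$, and since $F$ is invariant $F\Delta(h)^{-1}F^{-1}=\Delta(h)^{-1}$, so $F(\partial h)F^{-1}=(h\otimes h)\Delta(h)^{-1}=\partial h$. Thus conjugation by any element of $Z_2^{\mathrm{inv}}$ fixes $B_2^{\mathrm{inv}}$ pointwise, so it is even central — a fortiori normal — in $Z_2^{\mathrm{inv}}$.
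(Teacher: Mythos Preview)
Your proof is correct and follows essentially the same strategy as the paper's: the first assertion via coassociativity $\Delta^{n+1}=\Delta_i\Delta^n$ (you are actually more careful than the paper in separately handling the edge maps $\Delta_0$ and $\Delta_{n+1}$), and normality via the observation that $\partial h$ is central in $\mathbb{G}_2^{\mathrm{inv}}$ when $h\in\mathbb{G}_1^{\mathrm{inv}}$ is central.

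The only difference worth noting is in the second assertion. Your commutation facts ($k_{23}$ commutes with $h_{1(23)}$, and $k_{12}$ with $h_{(12)3}$, for $k$ invariant) are exactly right, but the paper uses them to make a cleaner structural observation: they show that both $\partial_+$ and $\partial_-$ restrict to group homomorphisms $\mathbb{G}_2^{\mathrm{inv}}\to\mathbb{G}_3^{\mathrm{inv}}$. Since the cocycle condition $\partial h=1$ can be rewritten as $\partial_+h=\partial_-h$, the set $Z_2^{\mathrm{inv}}$ is then just the equalizer of two group homomorphisms, hence automatically a subgroup --- no need to chase products and inverses separately. Your direct expansion of $\partial(hk)$ and $\partial(h^{-1})$ amounts to the same computation unrolled by hand; the paper's packaging is a bit slicker and worth knowing.
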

\begin{proof}
From coassociativity, $\Delta^{n+1}=\Delta_i\Delta^{n}$ for all $i=1,\ldots,n$. Hence
$$
\Delta^{n+1}(a)\Delta_i(h)=\Delta_i\big(\Delta^{n}(a)h\big)=
\Delta_i\big(h\Delta^{n}(a)\big)=\Delta_i(h)\Delta^{n+1}(a)
$$
for all $a\in H$, $h\in\mathbb{G}_n^{\mathrm{inv}}$ and $i=1,\ldots,n$. Thus $\Delta_i(h)$ is invariant for all $i$,
and so is $\partial h$.

Clearly $\partial:\mathbb{G}_1^{\mathrm{inv}}\to \mathbb{G}_2^{\mathrm{inv}}$ is a group homomorphism,
and $\mathbb{G}_1^{\mathrm{inv}}$ is a group (the set of central grouplike elements of $H$),
hence $B_2^{\mathrm{inv}}$ is a group too.
One easily checks that $\partial_\pm:\mathbb{G}_2^{\mathrm{inv}}\to \mathbb{G}_3^{\mathrm{inv}}$ are group
homomorphism, hence $Z_2^{\mathrm{inv}}\subset \mathbb{G}_2^{\mathrm{inv}}$ is a subgroup,
since the $2$-cocycle condition can be written as $\partial_+h=(\partial_-h^{-1})^{-1}$.
From the second statement in Lemma \ref{lemma:2.2} it follows the inclusion $B_2^{\mathrm{inv}}\subset Z_2^{\mathrm{inv}}$.
Furthermore, for any $a\in \mathbb{G}_1^{\mathrm{inv}}$ (hence in the center of $H$) and $b\in \mathbb{G}_2^{\mathrm{inv}}$,
$(\partial a)b=b(\partial a)$. This proves that $B_2^{\mathrm{inv}}\subset Z_2^{\mathrm{inv}}$ is a normal subgroup.
\end{proof}

It follows from previous lemma that the second cohomology group is well-defined (but higher cohomology groups in general
are still not defined).

\smallskip

In its dual version, chains are given by linear maps from $H^{\otimes n}$ to the base ring and the group operation is given by
the convolution product. In this case, one can define the first two homology groups for an arbitrary (possibly non-cocommutative)
Hopf algebra by considering \emph{lazy chains} (which are the analogue of invariant cochains).
This definition is originally due to Schauenburg, and systematically studied by Bichon, Carnovale, Guillot, Kassel and collaborators
\cite{BC06,BK08,GK10}.

\smallskip

From the above discussion, we see that the natural definition \ref{def:chain} of cohomology complex doesn't work in the non-Abelian case, at least in the context of Hopf algebras.
Focusing on invariant cochains allows to define the first two cohomology groups, but invariant cochains correspond to trivial deformations of Hopf algebras (cf.~\S\ref{sec:cochain}). So, at least if we are interested in deformation theory, 
invariant cochains are not what we are interested in.

We'll see in the next section that the correct framework for deformation theory (not only of Hopf algebras) is provided by non-Abelian cohomology.

\subsection{Non-Abelian cohomology}\label{sec:nonAb}
A textbook reference on non-Abelian cohomology is \cite{Gir71}. The following (simplified) definition is taken from \cite{Oni67}.
A non-Abelian cochain complex $C^\bullet$ consists of the following data: two groups $C^0,C^1$, a pointed set $(C^2,e)$, a map $\partial_1:C^1\to C^2$ sending the unit of $C^1$ to the basepoint $e$ of $C^2$ and two group morphisms $\alpha:C^0\to\mr{Aff}(C^1)$ and $\beta:C^0\to\mr{Aut}(C^2)$ satisfying
$\partial_1\circ\alpha_x=\beta_x\circ\partial_1\;\forall\;x\in C^0$. We will say that the map $\partial_1$ is $C^0$-\emph{equivariant}.
Here $\mr{Aff}(C^1)=C^1\rtimes\mr{Aut}(C^1)$ is the semidirect product of $C^1$ with the group of automorphisms of $C^1$; the first factor acts on itself by left multiplication, and this gives an action of $\mr{Aff}(C^1)$ (hence of $C^0$) on $C^1$. We denote by $\mr{Aut}(C^2)$, instead, the group of permutations of $C^2$ that don't move the basepoint.

From this data one construct a sequence:
\medskip
\begin{center}
\begin{tikzpicture}
\tikzset{>=stealth}

\node (A) {$\{e\}$};
\node[right=of A] (B) {$C^0$};
\node[right=of B] (C) {$C^1$};
\node[right=of C] (D) {$C^2$};
\node[below=of C] (F) {$\mr{Aff}(C^1)$};
\node[right=of D] (E) {$\{e\}$};

\draw[->] (A) -- (B);
\draw[->] (B) --node[below left]{$\alpha$} (F);
\draw[->] (F) --node[right]{$\pi$} (C);
\draw[->] (C) --node[above]{$\partial_1$} (D);
\draw[->] (D) -- (E);
\draw[->,dashed] (B) --node[above]{$\partial_0$} (C);

\end{tikzpicture}
\end{center}
where $\pi(a,b):=a$ for all $a\in C^1$ and $b\in\mr{Aut}(C^1)$, and
$\partial_0:=\pi\circ\alpha$ (note that $\pi$, and then $\partial_0$, are not group homomorphisms).
The cohomology of this complex is defined by:
$$
H^0(C^\bullet):=\alpha^{-1}\big(\mr{Aut}(C^1)\big)
\;,\qquad
H^1(C^\bullet):=\ker\partial_1/\alpha(C^0)
$$
where $\ker\partial_1:=\{x\in C^1:\partial x=e\}$;
$H^0$ is a a group (a subgroup of $C^0$), and $H^1$ a pointed set.

\begin{rem}
From the equivariance condition, $\partial_1\alpha_x(y)=\beta_x\,\partial_1(y)=\beta_x(e)=e$ for all $x\in C^0$ and $y\in\ker\partial_1$. Thus, $\alpha_x(\ker\partial_1)\subset\ker\partial_1$ and the quotient space $H^1(C^\bullet)$ is well defined.
Note also that $H^0(C^\bullet)=\alpha^{-1}(\ker\pi)$ is exactly the kernel of $\partial_0$.

Since $\partial_0x=\alpha_x(e)$ ($e$ is fixed by $\mr{Aut}(C^1)$),
from the equivariance condition we deduce that, for all $x\in C^0$,
$\partial_1\partial_0x=\beta_x\partial_1e=\beta_x(e)=e$. So, the one above is indeed a cochain complex.
\end{rem}

\medskip

\noindent
From now on we will forget about $\partial_0$, and denote $\partial_1$ simply by $\partial$.

The example provided in \cite{Oni67} is given by the {\v C}ech cochain complex of a sheaf of (non-Abelian) groups, with application to the classification of supermanifolds. Here I give two examples: the deformation complexes controlling deformations of associative algebras and Hopf algebras/module algebras.

\begin{ex}[DGLAs]
Let $(L,\de)$ be a differential graded Lie algebra,
over a field $\Bbbk$, and $R=\Bbbk\oplus\mf{m}$ 
a commutative $\Bbbk$-algebra with $\mf{m}$
a nilpotent ideal.
Call $C^0:=\exp(L^0\otimes\mf{m})$ and
$C^i:=L^i\otimes \mf{m}$, for $i=1,2$. Since $L^0$ is a Lie algebra, $C^0$ is a group with BCH multiplication. Moreover, $C^1$ is a vector space, hence a group w.r.t.~the vector sum.

A map $\partial:C^1\to C^2$ is given by
$\partial x=\de x+\frac{1}{2}[x,x]$; its kernel are the solutions of the Maurer-Cartan equation.
Two morphisms $\alpha:C^0\to\mathrm{Aff}(C^1)$ and $\beta:C^0\to\mathrm{Aut}(C^2)$, the former being the group of affine transformations of the vector space $C^1$ and the latter being the group of permutations of the pointed space $C^2$, are given by \cite[Eq.~(3.69) \& (3.78)]{Esp15}:
\begin{align*}
\alpha_g(y) &:=
e^{\mr{ad}_x}(y+\de)-\de=
\sum_{n=0}^\infty\frac{1}{n!}\,\mr{ad}_x^n(y)-
\sum_{n=1}^\infty\frac{1}{n!}\,\mr{ad}_x^{n-1}(\de x) \;,
\\
\beta_g(z) &:=
e^{\mr{ad}_x}(z)=
\sum_{n=0}^\infty\frac{1}{n!}\,\mr{ad}_x^n(z) \;,
\end{align*}
for all $g=\exp x\in C^0$, $y\in C^1$ and $z\in C^2$ (since $\mf{m}$ is nilpotent, $e^{\mr{ad}_x}$ is a finite sum, hence well defined). Here $\mr{ad}_x(y):=[x,y]$ is the adjoint action, $\mr{ad}_x^n=\smash[b]{\underbrace{\mr{ad}_x\circ\ldots\circ\mr{ad}_x}_{n\text{ times}}}$ if $n\geq 1$, and $\mr{ad}_x^0=\id$.

Since $\partial y=\frac{1}{2}[y+\de,y+\de]\;\forall\;y\in C^1$, the condition  $\partial\alpha_g=\beta_g\partial$ is equivalent to
$$
[\alpha_g(y)+d,\alpha_g(y)+d]=
[e^{\mr{ad}_x}(y+\de),e^{\mr{ad}_x}(y+\de)]=
e^{\mr{ad}_x}\big([y+\de,y+\de]\big)=
\beta_g\big([y+\de,y+\de]\big) \;.
$$
But this is just the property of $\,e^{\mr{ad}_x}$ of being a morphism of graded Lie algebras.

Thus, $\partial$ is $C^0$-equivariant and $C^\bullet$ is a non-Abelian cochain complex.
When $(L,\de)$ is the shifted Hochschild complex
(with Gerstenhaber bracket) of an associative algebra $A$, the quotient space $H^1(C^\bullet):=\ker\partial/\exp(C^0)$ is the moduli space of $R$-deformations of $A$ (see e.g.~\cite{Esp15}).
\end{ex}

\begin{ex}[Hopf algebras]
In the notations of \S\ref{sec:Hopf}, let
$C^i:=\mathbb{G}_{i+1}$, for $i=0,1,2$,
and $\partial:C^1\to C^2$ be the map in
\eqref{eq:mappartial}.
Since $\partial_\pm:\mathbb{G}_1\to \mathbb{G}_2$ are group homomorphisms (the diagonal embedding and the coproduct), we can define an action $\alpha$ of $g\in C^0$ on $h\in C^1$ by:
\begin{equation}\label{eq:action}
\alpha_g(h)=(\partial_+g)h(\partial_-g^{-1}) \;.
\end{equation}
Note that
$\alpha_g(h)=(\partial g)(\mr{Ad}_{\partial_-g^{-1}})^{-1}(h)$ where $\partial g\in C^1$ and $\mr{Ad}_{x}h=xhx^{-1}$ is the adjoint action. Since $\mr{Ad}$ is an action by group automorphisms, $\alpha$ maps $C^0$ into $C^1\rtimes\mr{Aut}(C^1)$.

If we call $\beta$ the action of $C^0$ on $C^2$ given by $\beta_g=\mr{Ad}_{g\otimes g\otimes g}$, one can easily verify that
$\partial\circ\alpha_g=\beta_g\circ\partial\;\forall\;g\in C^0$, and we have another example of non-Abelian cohomology.

In this example, 
$H^0(C^\bullet)$ is the same cohomology group of Lemma \ref{eq:Hzero}, while $H^1(C^\bullet)$ is the set controlling (co)associative deformations of Hopf algebras/module algebras, as we will see in the next section.
\end{ex}

\subsection{Hopf cochains and quantization}\label{sec:cochain}
Let $A$ be a $H$-module algebra, i.e.~an associative unital algebra with an action $\az$ of $H$ satisfying
$h\az (ab)=(h_{(1)}\az a)(h_{(2)}\az b)$ and $h\az 1=\epsilon(h)1$ for all $h\in H$ and $a,b\in A$. Here I use Sweedler notation for
the coproduct, $\Delta(h)=h_{(1)}\otimes h_{(2)}$ with summation understood, and denote by $m:A\otimes A\to A$
the multiplication of $A$, $m(a\otimes b):=ab$. The module algebra condition then reads
$$
h\circ m=m\circ\Delta h
$$
for all $h\in H$, where the action of $h$ on $A$ is understood.

Given any $2$-cochain $F\in\mathbb{G}_2$, we can define a new coproduct $\Delta_F$ on $H$ and a new product $\ast_F$ on $A$ by
\begin{equation}\label{eq:deformedprod}
\Delta_F(h):=F\Delta(h)F^{-1} \;,\qquad
m_F(a\otimes b)=a\ast_Fb:=m\circ F^{-1}(a\otimes b) \;,
\end{equation}
for all $h\in H$ and $a,b\in A$, where in the second formula the action of $H^{\otimes 2}$ on $A^{\otimes 2}$ is understood.
\linebreak
Let $H_F$ be $H$ as an algebra, with the same counit but deformed coproduct $\Delta_F$, and let $A_F$ be $A$ as a vector space but with deformed product $m_F$. By construction
$$
h\circ m_F=m_F\circ \Delta_Fh \;,\qquad\forall\;h\in H_F.
$$
Moreover:
\begin{align}
(\id\otimes\Delta_F)\Delta_F(h) &=\Phi_F
(\Delta_F\otimes\id)\Delta_F(h)\Phi_F^{-1} \;,\quad\forall\;h\in H_F \label{eq:DeltaF} \\[3pt]
m_F(m_F\otimes\id) &=m_F(\id\otimes m_F)\Phi_F \;, \label{eq:quasiass}
\end{align}
where $\Phi_F:=\partial F$ is the \emph{coassociator}.

If $F$ is a \emph{counital} $2$-cochain, then 
$H_F$ is a quasi-Hopf algebra in the sense of Drinfeld \cite{Dri90}
and $A_F$ is a (not necessarily associative) unital algebra and a $H_F$-module algebra.\footnote{%
Counitality guarantees that $(\id\otimes\epsilon)\Delta_F=(\epsilon\otimes\id)\Delta_F=\id$ and that the unit $1$ of
$A$ is also a unit of $A_F$.
One can work with bialgebras and quasi-bialgebras as well, since the antipode plays no role in the above construction.
There are several other axioms in the definition of quasi-bialgebra/quasi-Hopf algebras, besides \eqref{eq:DeltaF}, that
we omitted since we are mainly interested in the associativity property. In the Hopf case there is an explicit formula
for the deformed antipode, and it is a general result that being a quasi-Hopf algebra is a property preserved by twisting
with any invertible counital 2-cochain. For the details one can see \cite{Dri90}.}

As shown in Example \ref{ex:2.7}, even if $\Phi_F$ is a coboundary, it doesn't mean that it is a $3$-cocycle.
On the other hand, it is a cocycle in the Hopf cohomology of $H_F$. More precisely:

\begin{lemma}[{\cite[pag.~64-65]{Maj95}}]
Let $\partial_F$ be the coboundary operator constructed with $\Delta_F$ instead of $\Delta$.
Then for all $2$-cochains $F$, $\partial_F(\partial F)=\partial_F\Phi_F=0$.
\end{lemma}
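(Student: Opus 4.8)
The plan is to recognise that the identity $\partial_F\Phi_F=1$ is nothing but the \emph{pentagon identity} for the twisted data $(H_F,\Delta_F,\Phi_F)$. Unravelling the formula \eqref{eq:DEsubC} with $h=\Phi_F$, and with the understanding that the bracketed subscripts now denote iterations of $\Delta_F$, one finds
$$
\partial_F\Phi_F=(1\otimes\Phi_F)\,(\id\otimes\Delta_F\otimes\id)(\Phi_F)\,(\Phi_F\otimes 1)\,(\Delta_F\otimes\id\otimes\id)(\Phi_F^{-1})\,(\id\otimes\id\otimes\Delta_F)(\Phi_F^{-1}) \;,
$$
so that $\partial_F\Phi_F=1^{\otimes 4}$ is precisely
$$
(1\otimes\Phi_F)\,(\id\otimes\Delta_F\otimes\id)(\Phi_F)\,(\Phi_F\otimes 1)=(\id\otimes\id\otimes\Delta_F)(\Phi_F)\,(\Delta_F\otimes\id\otimes\id)(\Phi_F) \;.
$$
Thus the content of the lemma is that twisting the \emph{coassociative} coproduct $\Delta$ by an invertible $2$-cochain $F$ produces a coassociator that automatically satisfies the pentagon.

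My main route would be a direct computation. First I would use $\Delta_F(a)=F\Delta(a)F^{-1}$ to rewrite each leg operation appearing above in terms of the undeformed one conjugated by the appropriate $F_{ij}$: concretely $(\Delta_F\otimes\id\otimes\id)(X)=F_{12}(\Delta\otimes\id\otimes\id)(X)F_{12}^{-1}$, $(\id\otimes\Delta_F\otimes\id)(X)=F_{23}(\id\otimes\Delta\otimes\id)(X)F_{23}^{-1}$ and $(\id\otimes\id\otimes\Delta_F)(X)=F_{34}(\id\otimes\id\otimes\Delta)(X)F_{34}^{-1}$. Then I would substitute $\Phi_F=\partial F=F_{23}\,(\id\otimes\Delta)(F)\,(\Delta\otimes\id)(F^{-1})\,F_{12}^{-1}$, as given by \eqref{eq:DEsubB}. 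After this both sides become products of about a dozen factors in $\mathbb{G}_4$, each obtained from $F$ by applying $\Delta$ in various legs, and I would telescope everything to $1^{\otimes 4}$ by repeated use of the coassociativity of $\Delta$ (that is $(\Delta\otimes\id)\Delta=(\id\otimes\Delta)\Delta$ and its consequences such as $(\Delta\otimes\id\otimes\id)(\id\otimes\Delta)(F)=(\id\otimes\Delta\otimes\id)(\Delta\otimes\id)(F)$), bringing adjacent inverse pairs together until they cancel.

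Alternatively, I would record the conceptual shortcut: as recalled in the footnote after \eqref{eq:quasiass}, twisting a quasi-bialgebra by an invertible $2$-cochain preserves the quasi-bialgebra axioms, in particular the pentagon. Applying this to $(H,\Delta)$ regarded as a quasi-bialgebra with trivial coassociator $\Phi=1$ — for which the pentagon is vacuous, being exactly the coassociativity of $\Delta$ — and to the cochain $F$, one obtains at once that $\Phi_F$ satisfies the pentagon, i.e.\ $\partial_F\Phi_F=1$. The only point to notice is that this does not require $F$ to be counital: counitality governs only the interaction of $\Phi_F$ with $\epsilon$, not the pentagon, so the cited argument applies verbatim. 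A third, more hands-on variant is to verify the quasi-associativity \eqref{eq:quasiass} on a faithful $H$-module algebra and then read off the pentagon from the honest associativity of $m$ applied to four elements; this is conceptually the cleanest but requires exhibiting such a faithful module algebra, hence is less self-contained.

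The hard part is purely the bookkeeping of the direct computation: nothing is deep, every step being either the definition of $\Delta_F$, the definition of $\partial F$, or coassociativity of $\Delta$, but one must track which tensor leg each factor occupies, in which order the conjugating $F_{ij}$ sit, and which instance of coassociativity to invoke at each cancellation. I would keep this under control by using the compact subscript/leg notation of the previous sections (or the associated string-diagram calculus), and by grouping the factors so as to mirror the five terms of the pentagon before expanding, so that the cancellations occur in a predictable order.
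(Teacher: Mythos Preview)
Your main route is exactly what the paper does: expand each leg of $\partial_F\Phi_F$ by writing $(\id\otimes\Delta_F\otimes\id)(\Phi_F)=F_{23}\Phi_{1(23)4}F_{23}^{-1}$ and similarly for the other legs, then substitute $\Phi_F=F_{23}F_{1(23)}F^{-1}_{(12)3}F^{-1}_{12}$ and cancel adjacent inverse pairs using coassociativity of $\Delta$ until everything collapses to $1$. Your identification of the statement with the pentagon for $(H_F,\Delta_F,\Phi_F)$ and the conceptual shortcut via ``twisting a trivial coassociator preserves the pentagon'' are not in the paper's proof but are correct and illuminating; the paper opts for the bare telescoping computation without this framing.
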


\begin{proof}
One has
$
\Phi_F=F_{23}F_{1(23)}F^{-1}_{(12)3}F^{-1}_{12}
$
and
\begin{align*}
\partial_F\Phi_F &=
\Phi_{234}F_{23}\Phi_{1(23)4}F_{23}^{-1}\Phi_{123}F_{12}(\Phi^{-1})_{(12)34}F_{12}^{-1}F_{34}(\Phi^{-1})_{12(34)}F_{34}^{-1}
 \\ & =
F_{34}F_{2(34)}\big(F^{-1}_{(23)4}(F^{-1}_{23}
F_{23})F_{(23)4}\big)F_{1(234)}F^{-1}_{(123)4}\big(F^{-1}_{1(23)}
(F_{23}^{-1}
\\ & \qquad
F_{23})F_{1(23)}\big)\big(F^{-1}_{(12)3}(F^{-1}_{12}
F_{12})
F_{(12)3}\big)F_{(123)4}F^{-1}_{(12)(34)}\big(F^{-1}_{34}
F_{12}^{-1}F_{34}
\\ & \qquad
F_{12}\big)F_{(12)(34)}F^{-1}_{1(234)}F^{-1}_{2(34)}
F_{34}^{-1}
 \\ \intertext{(simplifying first the terms in parenthesis)}
 &=
F_{34}\Big(F_{2(34)}\big(F_{1(234)}(F^{-1}_{(123)4}
F_{(123)4}F^{-1}_{(12)(34)}F_{(12)(34)})F^{-1}_{1(234)}\big)F^{-1}_{2(34)}\Big)
F_{34}^{-1}
 \\ &=
F_{34}\Big(F_{2(34)}\big(F_{1(234)}F^{-1}_{1(234)}\big)F^{-1}_{2(34)}\Big)F_{34}^{-1}
 \\ &= 1 \;. \qedhere
\end{align*}
\end{proof}

\noindent
The condition $\partial_F\Phi_F=1$ guarantees that the category of modules of $H_F$,
with operation $\otimes$ given by the Hopf tensor product, is a monoidal category.
Specifically, it is exactly Mac Lane's coherence condition, i.e.~the commutativity of the diagram in Fig.~\ref{fig:P} where the coassociator is seen as a module map $(A\otimes B)\otimes C\to A\otimes (B\otimes C)$
for any three objects $A,B,C$ (and with natural isomorphism of vector spaces understood).
Counitality of $F$, and then of $\Phi_F$, implies the commutativity of the triangle diagram in Fig.~\ref{fig:T}, the second condition in the definition of monoidal category (in fact, the horizontal arrow in this case is the identity).

\begin{figure}[t]
  \centering
  \subfloat[Pentagon diagram]{
\begin{small}
\begin{tikzpicture}[scale=0.8]
\tikzset{>=stealth}

\node (P0) at (90:2.8cm) {$((A\otimes B)\otimes C)\otimes D$};
\node (P1) at (90+72:2.5cm) {$(A\otimes B)\otimes (C\otimes D)$};
\node (P2) at (90+2*72:2.5cm) {$\mathllap{A\otimes (B\,\otimes\,}(C\otimes D))$};
\node (P3) at (90+3*72:2.5cm) {$A\otimes ((B \mathrlap{\,\otimes\,C)\otimes D))}$} ;
\node (P4) at (90+4*72:2.5cm) {$(A\otimes (B\otimes C))\otimes D$};

\draw
(P0) edge[->] node[left=1pt] {\raisebox{8pt}{$\Phi_{(12)34}$}} (P1)
(P1) edge[->] node[left=1pt] {$\Phi_{12(34)}$} (P2)
(P3) edge[->] node[above] {$\Phi_{234}$} (P2)
(P4) edge[->] node[right=1pt] {$\Phi_{1(23)4}$} (P3)
(P0) edge[->] node[right=1pt] {\raisebox{8pt}{$\Phi_{123}$}} (P4);
\end{tikzpicture}
\end{small}
  \label{fig:P} }
	\hspace{8mm}
  \subfloat[Triangle diagram]{
\begin{tikzpicture}
\tikzset{>=stealth}

\node (P0) at (0,0) {$(A\otimes \Bbbk)\otimes B$};
\node (P1) at (3.6,0) {$A\otimes (\Bbbk\otimes B)$};
\node (P2) at (1.8,-2) {$A\otimes B$};
\node (P3) at (1.8,-3) {};

\draw
(P0) edge[->] node[above] {$\Phi$} (P1)
(P0) edge[->] (P2)
(P1) edge[->] (P2);
\end{tikzpicture}
  \label{fig:T} }
	\vspace{-5pt}\caption{}
\end{figure}

\smallskip

With a straightforward computation one proves that $\Delta_F$ is coassociative if and only if the element
\begin{equation}\label{eq:fakecob}
(\partial_-F^{-1})(\partial_+F)
\end{equation}
is $H$-invariant, or equivalently $\Phi_F$ is invariant in $H_F$, i.e.~commutes with the image of the iterated deformed coproduct $\Delta^3_F$.
In this case, we will say that $F$ is an \emph{equivariant twist}.
Note that in general \eqref{eq:fakecob} is not the coboundary of a $2$-cochain like $\Phi_F$ (it is the coboundary of $F$
in the Hopf algebra cohomology of $H^{\mathrm{op}}$, that is $H$ with opposite product).

The module algebra $A_F$ is associative if{}f it is in the kernel of $\Phi_F-1$, which is not guaranteed by the invariance
condition and must be verified case by case. We refer to an algebra with multiplication satisfying \eqref{eq:quasiass} as \emph{quasi-associative}.

Module algebras $A_F$ are automatically associative if we impose the stronger condition $\Phi_F=1$, i.e.~that $F$ is a $2$-cocycle.
Since $\Phi_F=1$ implies that \eqref{eq:fakecob} is also $1$, clearly every cocycle twist is also an equivariant twist.
A summary is in Table \ref{table}.

When $F$ is a cocycle, $H_F$ is a Hopf algebra (not just a bialgebra) with a deformed antipode whose explicit formula can be found in \cite[\S2.3]{Maj95}.

\bigskip

\begin{table}[h]
\centering
\begin{tabular}{ccc}
\hline
\rule{0pt}{14pt}
$F$ & $H_F$ & $A_F$ \\[2pt]

\hline

\rule{0pt}{14pt}
Counital $2$-cochain & quasi-Hopf algebra & unital quasi-associative algebra \\[2pt]

Counital equivariant twist & Hopf algebra & unital quasi-associative algebra \\[2pt]

Counital cocycle twist & Hopf algebra & unital associative algebra \\[2pt]

\hline
\end{tabular}
\smallskip
\caption{}\label{table}\vspace{-4mm}
\end{table}

\begin{rem}
Let $A$ be a $H$-module algebra. If $F$ and $F'=\alpha_g(F)$ are in the same orbit for the action \eqref{eq:action}, for some $g\in\mathbb{G}_1$,
they define isomorphic quasi-Hopf algebras/module algebras: indeed, as one can easily check, 
the map $h\mapsto ghg^{-1}$ is an isomorphism $H_F\to H_F'$
and the map $a\mapsto g\az a$ is an isomorphism $A_F\to A_{F'}$ (see e.g.~Prop.~2.3.3 of \cite{Maj95}).

In particular, Hopf algebra deformations resp.~associative module algebras deformations are parametrized by the cohomology set 
$H^1(C^\bullet):=\ker\partial/\alpha(C^0)$
of \S\ref{sec:nonAb}.
\end{rem}

\begin{ex}[Abelian twist]\label{ex:2.13}
Let $P_1,P_2$ be primitive elements of a Hopf algebra $H$, i.e.~satisfying
$$
\Delta(P_i)=P_i\otimes 1+1\otimes P_i \;,\qquad
\epsilon(P_i)=0 \;,\qquad
S(P_i)=-P_i \;.
$$
Assume that $[P_1,P_2]=0$ and let $H\lbrak \hbar\rbrak $ be the Hopf algebra over the ring $R:=\C\lbrak \hbar\rbrak $ given by formal power series with coefficients in $H$, and operations extended $R$-linearly
(here $\otimes$ is the completed tensor product over $R$).
A counital $2$-cocycle is given by:
\begin{equation}\label{eq:Moyaltwist}
F:=e^{i\hbar P_1\otimes P_2} \;.
\end{equation}
This is cohomologous to
\begin{equation}\label{eq:coctwo}
F':=e^{\frac{i\hbar}{2}(P_1\otimes P_2-P_2\otimes P_1)} \;,
\end{equation}
that is: $F'=(\partial g)F$ where $g:=\exp\big\{\tfrac{i\hbar}{2}P_1P_2\big\}$.
Since $H$ is commutative, this is equivalent to
$F'=\alpha_g(F)$ where $\alpha$ is the action \eqref{eq:action}.
\end{ex}

\medskip

Recall that, for elements $X,Y$ of a Lie algebra with central commutator, Baker-Campbell-Hausdorff formula gives:
\begin{equation}\label{eq:BHC}
e^{\hbar X}e^{\hbar Y}=e^{\frac{1}{2}\hbar^2[X,Y]}e^{\hbar (X+Y)} \;.
\end{equation}

\begin{es}
Let $F,F',g$ be the cochains in Example \ref{ex:2.13}. Using \eqref{eq:BHC} verify that $F$ is a cocycle, and that $F'=\alpha_g(F)$.
\end{es}

\begin{rem}\label{rem:2.15}
Let $P_1,P_2$ be generators of the universal enveloping algebra $H:=\U(\R^2)\lbrak \hbar\rbrak $, acting on $C^\infty(\R^2)$ as derivations:
$$
P_j(f)=-i\partial f/\partial x_j \;,\qquad\forall\;j=1,2.
$$
Then $A:=C^\infty(\R^2)\lbrak \hbar\rbrak $ is a $H$-module algebra. The product \eqref{eq:deformedprod} associated to the cocycle \eqref{eq:coctwo} is the well known \emph{Moyal product} (see e.g.~Example 3.1 of \cite{Esp15}).
\end{rem}

\begin{es}
Who is the Hopf algebra $H_F$ (or $H_{F'}$) in Rem.~\ref{rem:2.15}?
\end{es}

In this section we discussed deformations of (quasi-)Hopf algebras where the coproduct changes (is conjugated to the original one via a $2$-cochain, cf.~\eqref{eq:deformedprod}), but the product doesn't. Moreover, this theory is not specifically designed for formal deformations.
On the other hand, one might be interested in formal deformations where both product and coproduct change.
There is a theory similar to the one of formal deformations of associative or Poisson algebras, where one starts with ``infinitesimal'' deformations. Their existence and the obstruction to extend an order $n$ deformation to an order $n+1$ are controlled as usual by a cohomology theory. The interested reader can find this subject described in details in Chapter 6 of \cite{CP94}.

\subsection{Heisenberg manifolds and the noncommutative torus}

We saw in \S\ref{sec:1} (and in particular, in \S\ref{sec:1.5}), that given a self-Morita equivalence bimodule over a $C^*$-algebra -- or a ``noncommutative line bundle'' over a ``noncommutative space'' $X$ -- we can construct the analogue of a principal $U(1)$-bundle $P\to X$. It would be nice to apply this construction to the noncommutative torus (Example \ref{ex:2.3}), which is probably the best known example of noncommutative space. Unfortunately, it has non-trivial SMEBs only for special values of $\theta$ (it must be a real quadratic irrationality, corresponding to a noncommutative torus with ``real multiplication'' \cite{Man04}).
This difficulty can be overcome by using quasi-associative algebras, i.e.~monoid objects \cite{ML98} in the category of representations of a quasi-Hopf algebra. This is a mild kind of non-associativity: as explained in \cite{AM98}, working with such objects is not very different than working with associative algebras.

In this section, I will explain how to deform a principal $U(1)$-bundle $M_3\to\T^2$ by means of cochain quantization and obtain a $\Z$-graded quasi-associative algebra $A=\bigoplus_{n\in\Z}A_n$ whose weight spaces are the well known Connes-Rieffel imprimitivity bimodules (``line bundles'') over the algebra $A_0$ of the noncommutative torus. For simplicity, I will work with formal deformations. Material in this section is taken from \cite{DF15}.

\subsubsection{The Heisenberg manifold $M_3$}

Let $H_3(\R)$ be the group of $3\times 3$ matrices
$$
(x,y,t):=
\left[\!\begin{array}{ccc}
1 & x & t \\ 0 & 1 & y \\ 0 & 0 & 1
\end{array}\!\right] \;,\qquad x,y,t\in\R.
$$
Right invariant vector fields (commuting with the right regular action, hence obtained by
differentiating the left regular one) have a basis of three elements
$$
X:=\frac{\partial}{\partial x}+y\frac{\partial}{\partial t}
\;,\qquad
Y:=\frac{\partial}{\partial y}
\;,\qquad
T:=\frac{\partial}{\partial t}
\;.
$$
which generate the universal enveloping algebra of the Heisenberg Lie algebra $\U(\mf{h}_3(\R))$.
Note that $T$ is central and $[X,Y]=-T$.

Let $H_3(\Z):=\{(x,y,t)\in H_3(\R):x,y,t\in\Z\}$.
By right invariance, these vector fields descend to the
$3$-dimensional Heisenberg manifold $M_3:=H_3(\R)/H_3(\Z)$, which is the total space of a principal $U(1)$-bundle over the torus $\T^2$. 
Thinking of functions on $G/K$ as right $K$-invariant functions on $G$, we get:
$$
C^\infty(M_3)=\big\{f\in C^\infty(S^1\times\R\times S^1):f(x,y+1,t+x)=f(x,y,t)\big\} \;,
$$
where $S^1=\R/\Z$ (so $f\in C^\infty(M_3)$ is periodic with period $1$ in $x$ and $t$).
The action of central elements $(0,0,t)\in H_3(\R)$ descends to a principal action of $U(1)$ on $M_3$, and $M_3/U(1)\simeq\T^2$. We identify
$C^\infty(\T^2)$ with the subset of $f\in C^\infty(M_3)$ that do not depend on $t$ (and so are periodic in both $x$ and $y$):
$$
C^\infty(\T^2)=\big\{f\in C^\infty(M_3):f(x,y,t+t')=f(x,y,t)\;\forall\;x,y,t,t'\big\} \;.
$$
By standard Fourier analysis every $f\in C^\infty(M_3)$ is of the form:
\begin{equation}\label{eq:fxyt}
f(x,y,t)=\sum_{m,n\in\Z}e^{2\pi i(mx+nt)}\widehat{f}(y;m,n) \;,
\end{equation}
where $\widehat{f}:\R\times\Z^2\to\C$ is a function satisfying
$$
\widehat{f}(y+1;m,n)=\widehat{f}(y;m+n,n)
$$
Note that $T$ has discrete spectrum when acting on $C^\infty(M_3)$ (by periodicity in $t$). For $n\in\Z$, if
$$
L_n:=\big\{f\in C^\infty(M_3):Tf=(2\pi in)f\big\}\;,
$$
then the algebra $\bigoplus_{n\in\Z}L_n$ is dense in $C^\infty(M_3)$.
Elements of $L_n$ are smooth sections of a line bundle of degree $n$ on $\T^2$ (these are all smooth line bundles on $\T^2$, modulo isomorphisms). In particular, $L_0=C^\infty(\T^2)$.

For all $n\neq 0$, there is a bijection $f\in L_n\mapsto\widetilde{f}\in\mc{S}(\R\times\Z/n\Z)$ (where $\mc{S}$ denotes the set of Schwartz functions), given by \cite{DFF13}:
\begin{equation}\label{eq:fxy}
f(x,y,t)=\sum_{m\in\Z}e^{2\pi i(mx+nt)}\widetilde{f}(y+\tfrac{m}{n};m) \;.
\end{equation}
This is called Weil-Brezin-Zak transform in solid state physics \cite[\S1.10]{Fol89}.

\subsubsection{A ``principal $U(1)$-bundle'' over the noncommutative torus}\label{sec:2.6.2}
Define the parameter space:
\begin{equation}\label{eq:parspace}
\Theta:=\hbar\,\C\lbrak\hbar\rbrak \;.
\end{equation}
Let $\theta\in\Theta$.
We now deform the pointwise product of $A=C^\infty(M_3)\lbrak \hbar\rbrak $ with the cochain:
$$
F_\theta=e^{\frac{i\theta}{2\pi}X\otimes Y}
$$
based on the Hopf algebra $H:=\U(\mf{h}_3(\R))\lbrak \hbar\rbrak $. Note the similarity with \eqref{eq:Moyaltwist}.

\begin{es}\label{ex:2.19}
Let $\{a_n,b_n\}_{n\geq 0}$ belong to an associative algebra. 
Prove that the expression
$\sum_{n\geq 0}a_n\big(\sum_{k\geq 0}b_k\hbar^k\big)^n$ is a well defined formal power series (i.e.~that for each $N\geq 0$ the coefficient of $\hbar^N$ is a finite sum).
Realize that $F_{\theta}$ is a well defined power series of $\hbar$.
\end{es}

\noindent
Let $a\ast_\theta b:=m\circ F_\theta^{-1}(a\otimes b)$ be the product in \eqref{eq:deformedprod}, and $A^{\theta}=(A,\ast_\theta)$ the associated algebra. The star-product preserves the vector space decomposition
$A=\overline{\bigoplus_{n\in\Z}A_n}$, where
$A_n:=L_n\lbrak\hbar\rbrak $ (since $T$ is central, each set $L_n$ is stable under the action of $X,Y$). So, $A^{\theta}$ has a dense $\Z$-graded subalgebra.

\begin{lemma}\label{gcl}
For all $\theta,\theta'\in\Theta$:
\begin{equation*}
(\Delta\otimes\id)(F_\theta^{-1})(F_{\theta'}^{-1}\otimes 1)
=(\id\otimes\Delta)(F_{\theta'}^{-1})(1\otimes F_\theta^{-1})\Phi_{\theta,\theta'}
\end{equation*}
where
$$
\Phi_{\theta,\theta'}:=\exp\left\{-\frac{i}{2\pi}X\otimes\left(\theta-\theta'+\frac{i}{2\pi}\theta\theta'\,T\right)\otimes Y\right\} \;.
$$
\end{lemma}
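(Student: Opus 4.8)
The plan is a direct Baker--Campbell--Hausdorff computation. The point is that $X,Y$ are primitive and, together with $T$ central and $[X,Y]=-T$, the exponents that occur live in a two-step nilpotent Lie subalgebra of $H^{\otimes 3}$, so every exponential identity needed is exact. First one checks that every expression involved is a legitimate element of $H^{\otimes 3}$ (completed $\hbar$-adically): since $\theta,\theta'\in\hbar\,\C\lbrak\hbar\rbrak$ and $F_\theta^{-1}=e^{-\frac{i\theta}{2\pi}X\otimes Y}$, each exponential is a power series in $\hbar$ with finite $\hbar^N$-coefficients, as in Exercise~\ref{ex:2.19}.

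Next, using primitivity of $X$ and $Y$ one unwinds both sides. With
$$
a:=X\otimes 1\otimes Y\;,\qquad b:=1\otimes X\otimes Y\;,\qquad c:=X\otimes Y\otimes 1\;,
$$
one has $(\Delta\otimes\id)(X\otimes Y)=a+b$ and $(\id\otimes\Delta)(X\otimes Y)=c+a$, so, abbreviating $\lambda:=-\tfrac{i\theta}{2\pi}$ and $\mu:=-\tfrac{i\theta'}{2\pi}$, the identity to prove is equivalent to
$$
e^{\lambda(a+b)}e^{\mu c}=e^{\mu(c+a)}e^{\lambda b}\,\Phi_{\theta,\theta'}
\qquad\Longleftrightarrow\qquad
\Phi_{\theta,\theta'}=e^{-\lambda b}\,e^{-\mu(c+a)}\,e^{\lambda(a+b)}\,e^{\mu c}\;.
$$
The crucial step is then the commutator bookkeeping: a one-line check using $[X,Y]=-T$ and centrality of $T$ gives $[a,b]=[a,c]=0$ and $[b,c]=-d$, where $d:=X\otimes T\otimes Y$ is central in the subalgebra generated by $a,b,c$ (again because $T$ is central in $H$) and $[a,d]=0$. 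Hence $e^{\lambda(a+b)}=e^{\lambda a}e^{\lambda b}$, $e^{-\mu(c+a)}=e^{-\mu a}e^{-\mu c}$, and the $a$-exponentials commute with everything; collecting them yields
$$
\Phi_{\theta,\theta'}=e^{(\lambda-\mu)a}\;e^{-\lambda b}e^{-\mu c}e^{\lambda b}e^{\mu c}\;.
$$
Now $e^{-\lambda b}e^{-\mu c}e^{\lambda b}=e^{-\mu\,e^{-\lambda\mr{ad}_b}(c)}=e^{-\mu(c+\lambda d)}=e^{-\mu c}e^{-\lambda\mu d}$, where the $\mr{ad}_b$-series stops after one term because $[b,d]=0$; plugging this in, the last four factors collapse to $e^{-\lambda\mu d}$, so $\Phi_{\theta,\theta'}=e^{(\lambda-\mu)a-\lambda\mu d}$.

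Finally one substitutes $\lambda=-\tfrac{i\theta}{2\pi}$, $\mu=-\tfrac{i\theta'}{2\pi}$ and factors the exponent:
$$
(\lambda-\mu)a-\lambda\mu d=-\tfrac{i}{2\pi}(\theta-\theta')\,X\otimes 1\otimes Y+\tfrac{\theta\theta'}{4\pi^2}\,X\otimes T\otimes Y=-\tfrac{i}{2\pi}\,X\otimes\bigl(\theta-\theta'+\tfrac{i}{2\pi}\theta\theta'\,T\bigr)\otimes Y\;,
$$
which is exactly $\log\Phi_{\theta,\theta'}$. The only genuine input is the identity $[b,c]=-X\otimes T\otimes Y$ together with the observation that this element is central: that is what makes every nested commutator vanish and renders BCH — in the form $e^{X}e^{Y}e^{-X}=e^{Y+[X,Y]}$ for central $[X,Y]$, i.e.\ \eqref{eq:BHC} — exact. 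Everything else is keeping track of legs, signs, and the order of factors; the obvious sanity check at $\theta'=0$, where both sides reduce to $e^{\lambda a}e^{\lambda b}$ and $\Phi_{\theta,0}=e^{\lambda a}$, catches any sign slip.
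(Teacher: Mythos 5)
Your computation is correct and is exactly the ``direct computation using Baker--Campbell--Hausdorff'' that the paper invokes without detail: the reduction to $a=X\otimes1\otimes Y$, $b=1\otimes X\otimes Y$, $c=X\otimes Y\otimes1$ with $[a,b]=[a,c]=0$, $[b,c]=-X\otimes T\otimes Y$ central, and the resulting collapse to $e^{(\lambda-\mu)a-\lambda\mu d}$ all check out, including the signs against the stated $\Phi_{\theta,\theta'}$. No gaps; this is the same approach as the paper, just written out in full.
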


\begin{proof}
The proof is a direct computation using 
Baker-Campbell-Hausdorff formula \eqref{eq:BHC}.
\end{proof}

\begin{rem}
It is straightforward to verify that the coassociator $\Phi_{\theta,\theta}=e^{(\theta/2\pi)^2X\otimes T\otimes Y}$ is not trivial nor invariant,
so that $F_\theta$ is neither a cocycle nor an equivariant twist.
\end{rem}

\medskip

\noindent
There is an action $\alpha$ of $\Z$ by automorphisms on the vector space $\Theta$ given by
\begin{equation}\label{eq:nact}
\alpha_k(\theta)=\frac{\theta}{1+k\theta}=
\theta-k\theta^2+\ldots
 \;,\qquad k\in\Z,\theta\in\Theta\;.
\end{equation}
Clearly $\alpha$ maps $\Theta$ into itself, and $\alpha_j\alpha_k=\alpha_{j+k}\;\forall\;j,k\in\Z$.

\begin{prop}\label{prop:gcl}
The generalized associativity law
\begin{equation}\label{eq:genass}
(a\ast_{\theta'}b)\ast_\theta c=
a\ast_{\theta'}(b\ast_\theta c)
\end{equation}
holds for all $a,c\in A$ and $b\in A_n$
if and only if $\theta'=\alpha_n(\theta)$, with $\alpha$ as in \eqref{eq:nact}.
\end{prop}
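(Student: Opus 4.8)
\emph{Proof proposal.}
The plan is to rewrite both sides of \eqref{eq:genass} purely in terms of the undeformed multiplication $m$ of $A$ and the iterated coproducts of the two twists, then feed Lemma~\ref{gcl} into this and read off exactly when the surviving coassociator acts trivially. Since $A$ is an $H$-module algebra one has $h\circ m=m\circ\Delta h$, which gives the two identities of operators $A^{\otimes3}\to A$
\begin{align*}
m\circ F_\theta^{-1}\circ(m\otimes\id) &= m^{(2)}\circ(\Delta\otimes\id)(F_\theta^{-1})\;,\\
m\circ F_{\theta'}^{-1}\circ(\id\otimes m) &= m^{(2)}\circ(\id\otimes\Delta)(F_{\theta'}^{-1})\;,
\end{align*}
where $m^{(2)}:=m(m\otimes\id)=m(\id\otimes m)$ is the associative triple product. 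Unwinding the definition of $\ast_\theta$ and $\ast_{\theta'}$ then yields
\begin{align*}
(a\ast_{\theta'}b)\ast_\theta c &= m^{(2)}\circ(\Delta\otimes\id)(F_\theta^{-1})\,(F_{\theta'}^{-1}\otimes1)\,(a\otimes b\otimes c)\;,\\
a\ast_{\theta'}(b\ast_\theta c) &= m^{(2)}\circ(\id\otimes\Delta)(F_{\theta'}^{-1})\,(1\otimes F_\theta^{-1})\,(a\otimes b\otimes c)\;.
\end{align*}
Writing $\mathcal{R}(x\otimes y\otimes z):=x\ast_{\theta'}(y\ast_\theta z)$ for the operator on the second line and invoking Lemma~\ref{gcl}, the first line becomes $\mathcal{R}\big(\Phi_{\theta,\theta'}(a\otimes b\otimes c)\big)$. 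Hence \eqref{eq:genass} holds for a given triple $(a,b,c)$ if and only if $\mathcal{R}\big(\Phi_{\theta,\theta'}(a\otimes b\otimes c)\big)=\mathcal{R}(a\otimes b\otimes c)$.

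Next I would compute the action of $\Phi_{\theta,\theta'}$ on $A\otimes A_n\otimes A$. Because $T$ is central in $H$ and acts on $L_n$, hence on $A_n$, by the scalar $2\pi in$, the middle leg $\theta-\theta'+\tfrac{i}{2\pi}\theta\theta'T$ of the coassociator reduces to the scalar $\lambda:=\theta-\theta'-n\theta\theta'\in\C\lbrak \hbar\rbrak $, so $\Phi_{\theta,\theta'}$ restricts there to $\exp\{-\tfrac{i\lambda}{2\pi}\,X\otimes\id\otimes Y\}$. If $\lambda=0$ this is the identity, so $\mathcal{R}\circ\Phi_{\theta,\theta'}=\mathcal{R}$ on $A\otimes A_n\otimes A$ and \eqref{eq:genass} holds for all $a,c\in A$ and $b\in A_n$. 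Since $1+n\theta$ is invertible in $\C\lbrak \hbar\rbrak $, the condition $\lambda=0$, i.e.\ $\theta=\theta'(1+n\theta)$, is equivalent to $\theta'=\theta/(1+n\theta)=\alpha_n(\theta)$ with $\alpha_n$ as in \eqref{eq:nact}; this settles the ``if'' direction.

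For the converse, suppose $\lambda\neq0$ and write $\lambda=c_d\hbar^d+\dots$ with $c_d\neq0$ and $d\geq1$. Expanding the exponential on $A\otimes A_n\otimes A$ gives $\Phi_{\theta,\theta'}(a\otimes b\otimes c)-a\otimes b\otimes c=-\tfrac{i\lambda}{2\pi}(X\triangleright a)\otimes b\otimes(Y\triangleright c)+O(\lambda^2)$, and since $F_\theta^{-1}=1+O(\hbar)$ we have $\mathcal{R}(x\otimes y\otimes z)=xyz+O(\hbar)$; combining,
$$
(a\ast_{\theta'}b)\ast_\theta c-a\ast_{\theta'}(b\ast_\theta c)
=-\tfrac{i c_d}{2\pi}\,(X\triangleright a)\,b\,(Y\triangleright c)\,\hbar^d+O(\hbar^{d+1})\;.
$$
Choosing $a=e^{2\pi ix}$ and $c=e^{2\pi iy}$ (smooth functions on $M_3$), one gets $X\triangleright a=2\pi i\,e^{2\pi ix}$ and $Y\triangleright c=2\pi i\,e^{2\pi iy}$, which never vanish; hence for any nonzero $b\in L_n\subset A_n$ the $\hbar^d$-coefficient above equals $2\pi i\,c_d\,e^{2\pi i(x+y)}b\neq0$, so \eqref{eq:genass} fails.

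The part I expect to require genuine care is the very first step: matching the legs of $(\Delta\otimes\id)(F_\theta^{-1})$, $(\id\otimes\Delta)(F_{\theta'}^{-1})$ and the flanking factors $F^{-1}\otimes1$, $1\otimes F^{-1}$ exactly to the left-hand side of Lemma~\ref{gcl}, and keeping the roles of $\theta$ and $\theta'$ straight throughout. Once that bookkeeping is done, the centrality of $T$ collapses $\Phi_{\theta,\theta'}$ to a scalar exponential on $A_n$, and both directions follow quickly.
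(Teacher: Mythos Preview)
Your argument is correct and follows the same route as the paper: unwind the two deformed products into $m^{(2)}$ acting on the appropriate $H^{\otimes 3}$-elements, invoke Lemma~\ref{gcl}, and then use that $T$ acts by $2\pi in$ on $A_n$ to reduce $\Phi_{\theta,\theta'}$ to the scalar condition $\theta-\theta'-n\theta\theta'=0$. The paper's own proof is a one-line sketch of exactly this (``$Tb=2\pi inb$, hence $\Phi_{\theta,\theta'}$ is the identity''), so your write-up is in fact more complete --- in particular your explicit $\hbar^d$-leading-term computation for the ``only if'' direction supplies an argument the paper leaves implicit.
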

\begin{proof}
From the definition of star product and the observation that $Tb=2\pi inb$, hence $\Phi_{\theta,\theta'}$ is the identity on $a\otimes b\otimes c$.
\end{proof}

\noindent
Many properties of $A^{\theta}$ can be deduced from Prop.~\ref{prop:gcl}.

\begin{thm}~\setcounter{enumi}{1}
\begin{list}{\arabic{enumi})\stepcounter{enumi}}{\leftmargin=2em \itemindent=-1em \itemsep=5pt}
\item
$A_0^{\theta}=(A_0,\ast_\theta)$ is an associative unital subalgebra of $A^{\theta}$.

\item
$A_n$ is an $A_0^{\theta'}$-$A_0^{\theta}$-bimodule, with $\theta'=\alpha_n(\theta)$.

\item
$\,\ast_\theta:A_j\otimes A_k\to A_{j+k}$ descends to a map
$A_j\otimes_{\smash[t]{A_0^{\theta}}}\! A_k\to A_{j+k}$ (where the left and right module structure are given by the $\ast_\theta$ multiplication).

\item
For all $m,n,p\in\Z$ and
$\theta'=\alpha_n(\theta)$, the following diagram commutes:

\vspace{7pt}

\begin{center}
\begin{tikzpicture}
\tikzset{>=stealth, shorten >=1pt, shorten <=1pt}

\node (A) at (0,0) {$A_m\otimes_{\smash[t]{A_0^{\theta'}}}\! A_n\otimes_{\smash[t]{A_0^{\theta}}} A_p$};
\node (B) at (-1.9,-1.9) {$A_m\otimes_{\smash[t]{A_0^{\theta'}}}\! A_{n+p}$};
\node (C) at (1.9,-1.9) {$A_{m+n}\otimes_{\smash[t]{A_0^{\theta}}} A_p$};
\node (D) at (0,-3.8) {$A_{m+n+p}$}; 

\draw[->] (A) --node[left]{\raisebox{9pt}{\footnotesize $\id\otimes\ast_\theta$}} (B);
\draw[->] (A) --node[right]{\raisebox{9pt}{\footnotesize $\ast_{\theta'}\otimes\id$}} (C);
\draw[->] (B) --node[below left=-3pt]{{\footnotesize $\ast_{\theta'}\;$}} (D);
\draw[->] (C) --node[below right=-2pt]{{\footnotesize $\;\ast_\theta$}} (D);

\end{tikzpicture}
\end{center}
\end{list}
\end{thm}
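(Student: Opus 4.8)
The plan is to derive all four statements from Proposition~\ref{prop:gcl}, i.e.\ from the generalized associativity law \eqref{eq:genass}, supplemented only by two elementary facts already recorded above: that $\ast_\theta$ respects the decomposition $A=\overline{\bigoplus_{n\in\Z}A_n}$, and that $X$ and $Y$ act as derivations and hence annihilate the constant function, so that $1\in A_0$ is a two-sided unit for every $\ast_\theta$ (this is counitality of $F_\theta$). The recurring device is to specialize \eqref{eq:genass} by taking the \emph{middle} argument to be the distinguished homogeneous element, so that the constraint $\theta'=\alpha_n(\theta)$ shows up exactly where it is needed. For the first statement I would apply \eqref{eq:genass} with $b\in A_0$; since $\alpha_0=\id$ it reads $(a\ast_\theta b)\ast_\theta c=a\ast_\theta(b\ast_\theta c)$ for all $a,c\in A$, in particular for $a,b,c\in A_0$, which together with $A_0\ast_\theta A_0\subset A_0$ and $1\in A_0$ exhibits $A_0^\theta$ as an associative unital subalgebra of $A^\theta$.

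I would then treat the second and third statements together, as both reduce to the $\alpha_0=\id$ case of \eqref{eq:genass} plus one genuine use of it. Take the left $A_0^{\theta'}$-action on $A_n$ to be $c\mapsto c\ast_{\theta'}(-)$ and the right $A_0^\theta$-action to be $(-)\ast_\theta c$; these preserve $A_n$ by the grading, the associativity axioms for each one-sided action are instances of \eqref{eq:genass} with middle argument in $A_0$ (ambient parameter $\theta'$, resp.\ $\theta$), and the bimodule compatibility $(c\ast_{\theta'}b)\ast_\theta c'=c\ast_{\theta'}(b\ast_\theta c')$ for $b\in A_n$ is \eqref{eq:genass} itself, valid precisely for $\theta'=\alpha_n(\theta)$. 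For the third statement one checks that $\ast_\theta\colon A_j\otimes A_k\to A_{j+k}$ is balanced over $A_0^\theta$ for the $\ast_\theta$-module structures, i.e.\ $(a\ast_\theta c)\ast_\theta b=a\ast_\theta(c\ast_\theta b)$ for $a\in A_j$, $c\in A_0$, $b\in A_k$, which is once more \eqref{eq:genass} with $c\in A_0$ in the middle.

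For the last statement I would first verify that every object and arrow in the pentagon is legitimate. The iterated balanced tensor product $A_m\otimes_{A_0^{\theta'}}A_n\otimes_{A_0^{\theta}}A_p$ makes sense because $A_n$ is an $A_0^{\theta'}$-$A_0^\theta$-bimodule with $\theta'=\alpha_n(\theta)$ by the second statement; the arrow $\id\otimes\ast_\theta$ descends to the balanced product by the third statement and is moreover a left $A_0^{\theta'}$-module map because $c\ast_{\theta'}(x\ast_\theta y)=(c\ast_{\theta'}x)\ast_\theta y$ for $x\in A_n$ by \eqref{eq:genass}; symmetrically $\ast_{\theta'}\otimes\id$ is a right $A_0^\theta$-module map by the same instance of \eqref{eq:genass}. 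It then suffices to chase a generator $a\otimes b\otimes c$ with $b\in A_n$: the two composites yield $a\ast_{\theta'}(b\ast_\theta c)$ and $(a\ast_{\theta'}b)\ast_\theta c$, which coincide by \eqref{eq:genass} exactly because $\theta'=\alpha_n(\theta)$.

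There is no real obstacle here — the whole content of the theorem is carried by Proposition~\ref{prop:gcl} — but the step demanding care is precisely this bookkeeping in the last part: one must keep track of which deformation parameter labels the $A_0$-module structure on each weight space, so that the balanced tensor products are formed over the correct algebra and the arrows of the pentagon are module maps for those structures. This is where the non-trivial instance $\theta'=\alpha_n(\theta)$ of \eqref{eq:genass} gets used rather than the trivial $\alpha_0=\id$ instance, and it is what makes $A^\theta$, though only quasi-associative, carry this family of honest associative bimodules over the noncommutative tori $A_0^\theta$.
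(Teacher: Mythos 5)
Your proposal is correct and takes exactly the route the paper intends: the paper gives no written proof, stating only that ``many properties of $A^{\theta}$ can be deduced from Prop.~\ref{prop:gcl}'', and your argument is precisely the systematic unpacking of that remark, with the trivial instance $\alpha_0=\id$ handling statements (1)--(3) and the genuine instance $\theta'=\alpha_n(\theta)$ handling the bimodule compatibility and the commuting square. The bookkeeping of which deformation parameter governs each balanced tensor product is done correctly, so nothing is missing.
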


\noindent
It is not difficult to verify that $A^\theta_0$ is generated by two unitary elements, the functions
$$
U(x,y,t):=e^{2\pi ix} \;,\qquad
V(x,y,t):=e^{2\pi iy} \;,
$$
with relation
$$
U\ast_\theta V=e^{2\pi i\theta}\,V\ast_\theta U \;.
$$
By point (1) of the above theorem, $A_0^\theta$ is the formal analogue of the algebra in Example \ref{ex:2.3}.
Point (2) is the analogue of e.g.~Eq.~(2.6) of \cite{Pla06}, stating that the algebra of endomorphisms of a finitely generated projective module over a noncommutative torus is another noncommutative torus with different deformation parameter.
Point (4) is the formal version of \cite[Prop.~1.2(b)]{PS02}.

\subsubsection{Comparison with the literature}
Finitely generated projective modules for the noncommutative torus have been introduced in \cite{Con80,Rie83}. Here I will adopt notations similar to \cite{Pla06}. One can compare the bimodules $A_n$ with Connes-Rieffel modules using the Weil-Brezin-Zak transform.
In the notations of previous section, 
we can extend $\C\lbrak \hbar\rbrak $-linearly the map $f\mapsto\widetilde{f}$ in \eqref{eq:fxy} to a map $A_n\to E_n:=\mc{S}(\R\times\Z/n\Z)\lbrak\hbar\rbrak $.
Let $\theta'=\alpha_n(\theta)$ as in previous section.
We define a left action of $A^{\theta'}_0$
on $E_n$ and a right action of $A^\theta_0$ on $E_n$ by:
\begin{equation}\label{eq:bimod}
a.\widetilde{f}:=\widetilde{a\ast_{\theta'}f}
\qquad\quad
\widetilde{f}.a:=\widetilde{f\ast_\theta a}
\end{equation}
for all $\widetilde{f}\in E_n$ and $a\in A_0$.

A straightforward computation using 
\eqref{eq:fxy} gives, for all $\widetilde f\in E_n$ ($n\neq 0$):
\begin{align*}
(U.\widetilde{f})(y;m) &=\widetilde{f}(y+\theta'-\tfrac{1}{n};m-1) \;,&
(\widetilde{f}.U)(y;m) &=\widetilde{f}(y-\tfrac{1}{n};m-1) \;, \\[2pt]
(V.\widetilde{f})(y;m) &=e^{2\pi i(y-\frac{m}{n})}\widetilde{f}(y;m) \;, &
(\widetilde{f}.V)(y;m) &=e^{2\pi i(y-\frac{m}{n})}e^{2\pi in\theta y}\widetilde{f}(y;m) \;,
\end{align*}
where for a smooth function $\psi$,
by $\psi(y+\theta')$ we mean the formal power series
$$
\psi(y+\theta'):=e^{\theta'\partial_y}\psi(y)=\sum\nolimits_{k\geq 0}\tfrac{\theta'^k}{k!}\partial_y^k\psi(y) \;.
$$
These equations should be compared with, e.g., Eq.~(2.1-2.5) of \cite{Pla06}.\footnote{Here we are considering the case \mbox{$g=\,${\tiny\setlength{\arraycolsep}{2pt}$\bigg(\begin{array}{cc} 1 & 0 \\[-1pt] n & 1 \end{array}\bigg)$}}, in the notations of \cite[(2.3)]{Pla06}.
In order to get exactly the same formulas as in \cite{Pla06}
one should replace \eqref{eq:bimod} by
the definition $a.\widetilde{f}:=\widetilde{f\ast_{-\theta'}\hspace{-1pt}a}$
and
$\widetilde{f}.a:=\widetilde{a\ast_{-\theta}\hspace{-1pt}f}$.}

\smallskip

Finally, for all $\widetilde f_1\in E_n$ and $\widetilde f_2\in E_p$,
one can compute $\widetilde{f_1\ast_\theta f_2}\in E_{n+p}$
using \eqref{eq:fxy} and find the explicit formula:
$$
\widetilde{f_1\ast_\theta f_2}(y;m)=\sum_{j+k=m}\widetilde{f}_1\big(y+\tfrac{jp-kn}{n(n+p)};j\big)\widetilde{f}_2\big((1+n\theta)y-
(1-p\theta)\tfrac{jp-kn}{p(n+p)};k\big) \;,
$$
which is valid for $n,p,n+p\neq 0$. This is the analogue of \cite[Prop.~1.2(a)]{PS02}.\footnote{One gets the same notations of \cite{PS02} by defining the pairing of bimodules as the map $
(\widetilde f_1,\widetilde f_2)\mapsto
\widetilde{f_2\ast_{-\theta}\!f_1}$.}

\smallskip

For a discussion about complex structures on (formal) noncommutative tori (quantum theta functions, etc.), see \cite{DF15}.

\subsubsection{Some remarks on formal deformations of line bundles}\label{sec:2.6.4}
A general study of formal deformations of line bundles can be found in \cite{BW00,Bur02,BW02} for symplectic manifolds and \cite{BDW12} for a general Poisson manifold.
Given a smooth vector bundle $E\to X$, one can consider the corresponding $C^\infty(X)$-module of smooth sections $\Gamma^\infty(E)$. If $A_\star=(C^\infty(X)\lbrak\hbar\rbrak,\star)$ is a deformation quantization, one may show that there always exists a right $A_\star$-module structure $M\times A_\star\to M$ on
$M:=\Gamma^\infty(E)\lbrak\hbar\rbrak$: there is no obstruction for deformations of such modules \cite{BW00}. The algebra $\mr{End}_{A_\star}(M)$
is a deformation quantization of $\mr{End}_{C^\infty(X)}(\Gamma^\infty(E))$.
In particular, if $E\to X$ is a line bundle then
$\mr{End}_{C^\infty(X)}(\Gamma^\infty(E))\simeq C^\infty(X)$,
and $\mr{End}_{A_\star}(M)\simeq (C^\infty(X)\lbrak\hbar\rbrak,\star')=:A_{\star'}$ provides another deformation quantization of $X$, and $M$ is a Morita equivalence $A_{\star'}$-$A_\star$ bimodule (similarly to the example of the noncommutative torus). If $\mr{Def}(X)$ denotes the set of equivalence classes of star products on $X$, the map $\star\mapsto\star'$ induces a well-defined action of the Picard group $\mr{Pic}(X)$ on $\mr{Def}(X)$ \cite{Bur02}. In fact, if $\mr{Def}(X,\pi)$ denotes the set of equivalence classes of deformations in the direction of a fixed Poisson structure $\pi$ on $X$, one may show that the above action gives by restriction an action of $\mr{Pic}(X)$ on $\mr{Def}(X,\pi)$. As shown in \cite{Bur02}, the quotient $\mr{Def}(X)/\mr{Pic}(X)$ (resp.~$\mr{Def}(X,\pi)/\mr{Pic}(X)$) is the moduli space of Morita equivalent star products on $X$ (resp.~deformation quantizations in the direction of $\pi$).

\smallskip

For $X=\T^2$ with standard symplectic structure, star products are classified by
$$
\hbar\hspace{1pt}H^2_{\mr{deRham}}(X,\C)\lbrak\hbar\rbrak\simeq \hbar\,\C\lbrak\hbar\rbrak=\Theta \;,
$$
i.e.~the parameter space in \eqref{eq:parspace},
and the action of $\mr{Pic}(X)\simeq\Z$ is the one in \eqref{eq:nact}.

\smallskip

It would be interesting to understand if the ``principal bundle'' construction of \S\ref{sec:2.6.2} can be extended to more general symplectic or Poisson manifolds.

\medskip

{\noindent\bf Acknowledgements.}
I would like to thank Chiara Esposito and Stefan Waldman (the organizers of the school) for the invitation and all the participants for the stimulating discussions and the friendly atmosphere. Special thanks go to Henrique Bursztyn for his remarks and comments, which led to the drafting of \S\ref{sec:2.6.4}.


\end{document}